\newtheorem{prelem}{{\bf Proposition}}
\newtheorem{theorem}{Theorem}
\newtheorem{corollary}[theorem]{Corollary}
\newtheorem{observation}[theorem]{Observation}
\newtheorem{proposition}[theorem]{Proposition}
\theoremstyle{definition}
\theoremstyle{remark}
\begin{document}
\title{$k$-tuple total domination in inflated graphs}
\author{ Adel P. Kazemi \vspace{4mm}\\
Department of Mathematics\\
University of Mohaghegh Ardabili\\
P. O. Box 5619911367, Ardabil, Iran\\
adelpkazemi@yahoo.com\vspace{3mm} \\
}
\date{}
\maketitle

\begin{abstract}
The inflated graph $G_{I}$ of a graph $G$ with $n(G)$ vertices is
obtained from $G$ by replacing every vertex of degree $d$ of $G$ by
a clique, which is isomorph to the complete graph $K_{d}$, and each
edge $(x_{i},x_{j})$ of $G$ is replaced by an edge $(u,v)$ in such a
way that $u\in X_{i}$, $v\in X_{j}$, and two different edges of $G$
are replaced by non-adjacent edges of $G_{I}$. For integer $k\geq
1$, the $k$-tuple total domination number $\gamma _{\times k,t}(G)$
of $G$ is the minimum cardinality of a $k$-tuple total dominating
set of $G$, which is a set of vertices in $G$ such that every vertex
of $G$ is adjacent to at least $k$ vertices in it. For existing this
number, must the minimum degree of $G$ is at least $k$. Here, we
study the $k$-tuple total domination number in inflated graphs when
$k\geq 2$. First we prove that $n(G)k\leq \gamma _{\times
k,t}(G_{I})\leq n(G)(k+1)-1$, and then we characterize graphs $G$
that the $k$-tuple total domination number number of $G_I$ is
$n(G)k$ or $n(G)k+1$. Then we find bounds for this number in the
inflated graph $G_I$, when $G$ has a cut-edge $e$ or cut-vertex $v$,
in terms on the $k$-tuple total domination number of the inflated
graphs of the components of $G-e$ or $v$-components of $G-v$,
respectively. Finally, we calculate this number in the inflated
graphs that have obtained by some of the known graphs.
\end{abstract}

%%\linenumbers

\textbf{Keywords :} $k$-tuple total domination number, inflated graph

\textbf{2000 Mathematics subject classification :} 05C69

\section{Introduction}

All graphs considered here are finite, undirected, and simple. For standard
graph theory terminology not given here we refer to \cite{hhs1}. Let $%
G=(V,E) $ be a graph with \emph{vertex set} $V$ of $order$ $n(G)$
and \emph{edge set} $E$ of $size$ $m(G)$. The \emph{open
neighborhood} of a vertex $v\in V$ is $N_{G}(v)=\{u\in V\ |\ uv\in
E\}$ and its \emph{closed neighborhood} is $N_{G}[v]=N_{G}(v)\cup
\{v\}$. The \emph{degree} of a vertex $v$ is also $deg_G(v)=\mid
N_{G}(v) \mid $. The \emph{minimum} and \emph{maximum degree} of $G$
are respectively denoted by $\delta =\delta (G)$ and $\Delta =\Delta
(G)$. We say that a graph is $connected$ if there exist a path
between every two vertices of the graph, and otherwise is called
$disconnected$. In a connected graph $G$, a vertex (resp. edge) $v$
is called a $cut$-$vertex$ or (resp. $cut$-$edge$) if $G-v$ is
disconnected.
Every maximal connected subgraph of $G-v$ is called a ($connectedness$) $%
component$ of it. Let $v$ be a cut-vertex of a graph $G$ and $S$ be the
vertex set of a component of $G-v$. The induced subgraph by $S\cup \{v\}$ of
$G$ we call a $v$-$component$ of $G$.

An edge subset $M$ in $G$ is called a $matching$ in $G$ if any two
edges of $M$ has no vertex in common. If $e=vw\in M$, then we say either $M$ $%
saturate$ two vertices $v$ and $w$ or $v$ and $w$ are $M$-$saturated$ (by $e$%
). A matching M is a $perfect$ $matching$ if all vertices of $G$ are
$M$-saturated. Also a matching $M$ is a $maximum$ $matching$ if
there is no other matching $M^{\prime }$ with $\mid M^{\prime }\mid
>\mid M\mid $. In a graph $G$ the number of edges in a maximum
matching is denoted by $\alpha ^{\prime }(G)$.

Domination in graphs is now well studied in graph theory and the
literature on this subject has been surveyed and detailed in the two
books by Haynes, Hedetniemi, and Slater~\cite{hhs1, hhs2}. A set
$S\subseteq V$ is a \emph{total dominating set} if each vertex in
$V$ is adjacent to at least one vertex of $S$, while the minimum
cardinality of a total dominating set is the \emph{total domination
number} $\gamma _{t}(G)$ of $G$.

In \cite{HeKa09} Henning and Kazemi generalized this definition to
the $k$-tuple total domination number as follows: a subset $S$ of
$V$ is a \emph{$k$-tuple total dominating set} of $G$, abbreviated
kTDS, if for every vertex $v\in V $, $\mid N(v)\cap S\mid \geq k$;
that is, $S$ is a kTDS if every vertex has at
least $k$ neighbors in $S$. The \emph{$k$-tuple total domination number} $%
\gamma _{\times k,t}(G)$ of $G$ is the minimum cardinality of a kTDS
of $G$. We remark that $\gamma _{t}(G)=\gamma _{\times 1,t}(G)$. For
a graph to have a $k$-tuple total dominating set, its minimum degree
is at least $k$. Since every (k+1)TDS is also a kTDS, we note that
$\gamma _{\times k,t}(G)\leq \gamma _{\times (k+1),t}(G)$ for all
graphs with minimum degree at least $k+1$. A kTDS of cardinality
$\gamma _{\times k,t}(G)$ we call a $\gamma _{\times
k,t}(G)$-set. When $k=2$, a $k$-tuple total dominating set is called a \emph{%
double total dominating set}, abbreviated DTDS, and the $k$-tuple total
domination number is called the \emph{double total domination number}. The
redundancy involved in $k$-tuple total domination makes it useful in many
applications.

For the notation for inflated graphs, we follow that of \cite{Fa98}. The $%
infation$ or $infated$ graph $G_{I}$ of the graph $G$ without
isolated vertices is obtained as follows: each vertex $x_{i}$ of
degree $d(x_{i})$ of $G$ is replaced by a clique $X_{i}\cong
K_{d(x_{i})}$ (that is, $X_{i}$
is isomorph to the complete graph $K_{d(x_{i})}$) and each edge $%
(x_{i},x_{j})$ of $G$ is replaced by an edge $(u,v)$ in such a way that $%
u\in X_{i}$, $v\in X_{j}$, and two different edges of $G$ are
replaced by non-adjacent edges of $G_{I}$. An obvious consequence of
the definition is that $n(G_{I})=\sum_{x_{i}\in
V(G)}d_{G}(x_{i})=2m(G)$, $\delta (G_{I})=\delta (G)$ and $\Delta
(G_{I})=\Delta (G)$. There are two different kinds of edges in
$G_{I}$. The edges of the clique $X_{i}$ are colored red and the
$X_{i}$'s are called the $red$ $cliques$ (a red clique $X_{i}$ is
reduced to a point if $x_{i}$ is a pendant vertex of $G$). The other
ones, which correspond to the edges of $G$, are colored $blue$ and
they form a perfect matching of $G_{I}$. Every vertex of $G_{I}$
belongs to exactly one
red clique and one blue edge. Two adjacent vertices of $G_{I}$ are said to $%
red$-$adjacent$ if they belong to a same red clique, $blue$-$adjacent$
otherwise. In general, we adopt the following notation: if $x_{i}$ and $%
x_{j} $ are two adjacent vertices of $G$, the end vertices of the blue edge
of $G_{I}$ replacing the edge $(x_{i},x_{j})$ of $G$ are called $x_{i}x_{j}$
in $X_{i}$ and $x_{j}x_{i}$ in $X_{j}$, and this blue edge is $%
(x_{i}x_{j},x_{j}x_{i})$. Clearly an inflation is claw-free. More precisely,
$G_{I}$ is the line-graph $L(S(G))$ where the subdivision $S(G)$ of $G$ is
obtained by replacing each edge of $G$ by a path of length 2. The study of
various domination parameters in inflated graphs was originated by Dunbar
and Haynes in \cite{DH96}. Results related to the domination parameters in
inflated graphs can be found in \cite{Fa98,Fa01,Pu00}.

Henning and Kazemi in \cite{HK} discussed on total domination number in
inflated graphs which is the same $k$-tuple total domination number when $%
k=1 $. Here we continue the studying of the $k$-tuple total
domination number in inflated graphs when $k\geq 2$. This paper is
organized as follows. In section 2, we prove that if $k\geq 2$ is an
integer and $G$ is a graph of order $n$ with $\delta \geq k$%
, then $nk\leq \gamma _{\times k,t}(G_{I})\leq n(k+1)-1$, and then
we characterize graphs $G$ that $\gamma _{\times k,t}(G_{I})$ is
$nk$ or $nk+1$. In section 3, we find upper and lower bounds for the
$k$-tuple total domination number of the inflation of a graph $G$,
which contains a cut-edge $e$, in terms on the $k$-tuple total
domination number of the inflation of the components of $G-e$. Also
in a similar manner, we find upper and lower bounds for the
$k$-tuple total domination number of the inflation of a graph $G$,
which contains a cut-vertex $v$, in terms on the $k$-tuple total
domination number of the inflation of the $v$-components of $G-v$.
Also we find the $k$-tuple total domination number of the inflation
of the complete graphs. Finally, in section 4, we calculate the
$k$-tuple total domination number in the inflation of the known
graphs: the generalized Petersen graphs, Harary graphs and complete
bipartite graphs. Also we give an upper bound for this number in the
inflation of the complete multipartite graphs.

%%%%%%%%%%%%%%%%%%%%%%%%%%%%%%%%%%%%%%%%%%%%%%%%%%%%%%%%%%%%%%%%%%%%%%%%%%%%%%%%%%%%%%%%%%%%%%%%%%%%%%%%%%%%
\section{general bbounds}

First we give two general upper and lower bounds for the $k$-tuple
total domination number of inflated graphs, where $\delta \geq k\geq
2$.

\begin{theorem}
\label{LU.bounds} Let $k\geq 2$ be an integer, and let $G$ be a graph of order $n$ with $%
\delta \geq k$. Then
\begin{equation*}
nk\leq \gamma _{\times k,t}(G_{I})\leq n(k+1)-1.
\end{equation*}
\end{theorem}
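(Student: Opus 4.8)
The plan is to exploit the structure of $G_I$: every vertex lies in exactly one red clique $X_i$ and is the endpoint of exactly one blue edge. Fix a kTDS $S$ of $G_I$. For a fixed index $i$, consider the vertices of the red clique $X_i$. Each vertex $u \in X_i$ needs $k$ neighbors in $S$; its neighbors are the other $d(x_i)-1$ vertices of $X_i$ together with one blue partner outside. I would argue that $S$ must contain at least $k$ vertices from $N_{G_I}[X_i]$ restricted appropriately — more precisely, I want to show $|S \cap X_i| \ge k$ or compensate via blue edges. The clean way: for each $i$, look at a vertex $u=x_ix_j \in X_i$ whose blue partner is $x_jx_i$. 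If $|S \cap X_i| < k$, then $u$ picks up at most $|S\cap X_i|$ red neighbors in $S$ plus possibly its one blue neighbor, forcing $|S \cap X_i| \ge k-1$ and the blue partner to be in $S$. Doing a careful count of $\sum_i |S\cap X_i|$, summing the local constraints over all $n$ cliques, should yield $|S| = \sum_i |S \cap X_i| \ge nk$. The key observation making this work is that blue edges form a perfect matching, so a blue vertex in $S$ is counted in exactly one $X_i$ and helps exactly one vertex outside its clique.

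**Upper bound.** The plan is to construct an explicit kTDS of size $n(k+1)-1$. The natural candidate is: in every red clique $X_i$, put all but one vertex into $S$ — but we must be careful since $|X_i| = d(x_i)$ could be as small as $k$ (when $\delta = k$), giving only $k-1$ vertices, which is not enough red-domination for a vertex inside $X_i$. So instead I would take $S$ to contain, for each $i$, either all of $X_i$ or all but one vertex of $X_i$, chosen so that a vertex $u \in X_i$ with $u \notin S$ still sees $k$ vertices of $S$: it sees the $d(x_i)-1 \ge k-1$ other red vertices (if all are in $S$) plus its blue partner. This suggests: pick one blue edge to be the "deficient" edge globally, remove one vertex from one clique and its blue partner's... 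Actually the bound $n(k+1)-1$ versus $\sum d(x_i) = 2m \ge nk$ rooms suggests $S = V(G_I)$ minus a small set. I would set $S = V(G_I)$ and then greedily delete vertices while maintaining the kTDS property, showing at least $\sum(d(x_i)) - (n(k+1)-1)$ deletions are possible — equivalently, that a kTDS of that size exists by a matching/pruning argument on the blue edges.

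**Main obstacle.** The hard part will be the upper bound construction in the tight regime where many vertices have degree close to $k$, since then red cliques are small and there is little slack; I expect the blue perfect matching to be essential, and the $-1$ in $n(k+1)-1$ to come from exactly one clique where we cannot delete a vertex. I would handle this by a two-case analysis on whether $G$ has a vertex of degree $> k$ (slack available, delete freely) versus $G$ being $k$-regular (then $G_I$ has all cliques of size $k$, blue matching is forced, and a direct count gives the bound with the single exceptional vertex). For the lower bound, the obstacle is making the local clique-counting argument airtight when $|S \cap X_i|$ can exceed $k$ in some cliques and be deficient in others; I would resolve this by a discharging-style argument: each clique $X_i$ with a deficit of $k - |S\cap X_i|$ vertices must have that many blue partners in $S$ pointing in, and since blue edges form a matching these "incoming" contributions are disjoint across cliques, so the total cannot drop below $nk$.
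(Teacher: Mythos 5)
Your lower-bound plan has a real gap in the global step. The local constraint you extract (``$|S\cap X_i|\ge k-1$ and the blue partners point into $S$'') cannot be turned into $\sum_i|S\cap X_i|\ge nk$ by the discharging you describe: a blue partner lying in $S$ belongs to some \emph{other} red clique $X_j$ and is already counted there in $\sum_j|S\cap X_j|=|S|$, so ``incoming'' blue vertices contribute nothing extra to the total and cannot pay off a deficit. The missing observation --- essentially what the paper does --- is to test the domination condition at a vertex \emph{inside} $S\cap X_i$: since $k\ge2$ and each vertex of $X_i$ has only one neighbour outside $X_i$, necessarily $S\cap X_i\neq\emptyset$; and a vertex $v\in S\cap X_i$ has at most $|S\cap X_i|-1$ neighbours in $S\cap X_i$ plus at most one blue neighbour in $S$, so $|S\cap X_i|\ge k$ outright. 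Summing over the $n$ red cliques gives $nk$ with no discharging at all.

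For the upper bound you never arrive at a construction, and the regime you yourself flag as the obstacle --- many vertices of degree exactly $k$ --- is not merely hard: it is where the inequality actually fails. Take $G=K_{2,q}$ and $k=2$. Each degree-$2$ vertex $w_i$ becomes a red clique $K_2$ whose two vertices have exactly two neighbours in $G_I$, so any $2$TDS must contain both neighbours of each such vertex; running over all $i$ this forces $S=V(G_I)$, hence $\gamma_{\times 2,t}(G_I)=2m(G)=4q$, while $n(k+1)-1=3(q+2)-1=3q+5<4q$ for $q\ge6$. So no pruning of $V(G_I)$, and no other construction, can achieve the claimed bound in general. (The paper's own proof has the same blind spot: it selects a $(k+1)$-subset $S_j$ of every red clique $X_j$ with $j\ge2$, which is impossible when $d_G(x_j)=k$; patching this by taking all of such an $X_j$ forces the blue partners of all its vertices into $S$, which is exactly what blows up in $K_{2,q}$.) Your proposed two-case split on whether $G$ is $k$-regular does not cover the mixed-degree case that kills the bound; the statement needs an extra hypothesis such as $\delta\ge k+1$ or at most one vertex of degree $k$.
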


\begin{proof}
Let $V(G)=\{x_{i}\mid 1\leq i\leq n\}$ and let $S$ be an arbitrary
kTDS of $G_{I}$. Since every vertex of the red clique $X_{i}$\ is
adjacent to only one vertex of another red clique, then $\mid S\cap
X_{i}\mid \geq \mid N_{X_{i}}[v]\mid \geq k$, for each vertex $v\in
S\cap X_{i}$ and hence $\gamma _{\times k,t}(G_{I})\geq nk$.

Now we prove $\gamma _{\times k,t}(G_{I})\leq n(k+1)-1$. Set
$S_{1}=\{x_{1}x_{j}\mid 2\leq j\leq k+1\}$ as a subset of $X_{1}$.
For each $2\leq j\leq n$, let $S_{j}$ be a ($k+1$)-subset of $X_{j}$
such that $x_{j}x_{1}\in S_{j}$, for each $2\leq j\leq k+1$. Since
$S_{1}\cup
S_{2}\cup ...\cup S_{n}$ is a kTDS of $G_I$ with cardinal $n(k+1)-1$, then $%
\gamma _{\times k,t}(G_{I})\leq n(k+1)-1$.
\end{proof}

We recall the next proposition from \cite{HeKa09}.

\begin{prelem}
\label{HK.Lbound} \emph{(Kazemi, Henning \cite{HeKa09} 2010)} Let
$G$ be a graph with minimum degree at least $k$. If $k\geq 2$ is an
integer, then
\begin{equation*}
\gamma_{\times k,t}(G)\geq \lceil \frac{kn}{\Delta (G)}\rceil.
\end{equation*}
\end{prelem}

By Proposition \ref{HK.Lbound} and Theorem \ref{LU.bounds} we have
the next result.

\begin{corollary}
\label{L.bound,max} If $G$ is a graph of order $n$ and size $m$ with
$\delta (G)\geq k\geq 2$, then
\begin{equation*}
\gamma _{\times k,t}(G_{I})\geq \max \{nk,\lceil 2km/\Delta (G)
\rceil \}.
\end{equation*}
\end{corollary}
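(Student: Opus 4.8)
The plan is to simply combine the two facts we already have in hand. Theorem~\ref{LU.bounds} gives the lower bound $\gamma_{\times k,t}(G_I)\geq nk$, and Proposition~\ref{HK.Lbound}, applied to the graph $G_I$ rather than $G$, gives a second lower bound. Since a lower bound by the maximum of two quantities is nothing more than the assertion that each of the two quantities is a lower bound, the whole proof reduces to verifying the hypotheses of Proposition~\ref{HK.Lbound} for $G_I$ and then translating its conclusion into the parameters $n$ and $m$ of $G$.

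First I would note that $G_I$ has minimum degree $\delta(G_I)=\delta(G)\geq k\geq 2$, so $G_I$ indeed satisfies the hypothesis of Proposition~\ref{HK.Lbound}. Applying that proposition to $G_I$ yields
\[
\gamma_{\times k,t}(G_I)\geq\left\lceil\frac{k\,n(G_I)}{\Delta(G_I)}\right\rceil.
\]
Second I would use the structural identities recorded in the introduction for inflated graphs: $n(G_I)=2m(G)=2m$ and $\Delta(G_I)=\Delta(G)$. Substituting these gives $\gamma_{\times k,t}(G_I)\geq\lceil 2km/\Delta(G)\rceil$. Third, Theorem~\ref{LU.bounds} gives $\gamma_{\times k,t}(G_I)\geq nk$. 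Since $\gamma_{\times k,t}(G_I)$ is bounded below by both $nk$ and $\lceil 2km/\Delta(G)\rceil$, it is bounded below by their maximum, which is exactly the claimed inequality.

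There is really no main obstacle here; the only thing to be careful about is applying Proposition~\ref{HK.Lbound} to $G_I$ and not to $G$, and invoking the correct identities $n(G_I)=2m$ and $\Delta(G_I)=\Delta(G)$ from the definition of the inflated graph. Everything else is a one-line substitution, so the corollary follows immediately.
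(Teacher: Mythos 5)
Your proof is correct and is exactly the argument the paper intends: apply Proposition~\ref{HK.Lbound} to $G_I$ using $\delta(G_I)=\delta(G)\geq k$, substitute $n(G_I)=2m$ and $\Delta(G_I)=\Delta(G)$, and combine with the bound $nk$ from Theorem~\ref{LU.bounds}. No issues.
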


Let $k=\delta (G)$. Then, since every red clique of cardinal $k$ is
subset of every kTDS of $G_{I}$, Theorem \ref{LU.bounds} can be
improved in such a way.

\begin{corollary}
\label{LU.bounds,delta} Let $G$ be a graph of order $n$ with $\delta
\geq 2$. If $\ell$ is the number of vertices in $G$ of degree
$\delta $, then $n\delta \leq \gamma _{\times \delta ,t}(G_{I})\leq
n(\delta +1)-\ell$.
\end{corollary}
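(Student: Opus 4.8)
The lower bound $n\delta \le \gamma_{\times\delta,t}(G_I)$ is immediate from Theorem~\ref{LU.bounds} with $k=\delta$, so the work is entirely in the upper bound $\gamma_{\times\delta,t}(G_I)\le n(\delta+1)-\ell$. The plan is to produce an explicit $\delta$TDS of $G_I$ of that cardinality. First I would observe the structural fact hinted at in the remark: if $x_i$ is a vertex of $G$ with $\deg_G(x_i)=\delta$, then $X_i\cong K_\delta$, and since each vertex of $X_i$ has exactly one blue neighbour (outside $X_i$), a vertex $v\in X_i$ sees inside $X_i$ only its $\delta-1$ red neighbours; so to give $v$ its required $\delta$ blue-or-red neighbours in a $\delta$TDS we are forced to take all of $X_i\setminus\{v\}$ plus $v$'s blue neighbour — in particular, iterating over all $v\in X_i$, the \emph{entire} clique $X_i$ lies in every $\delta$TDS. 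This is the key economy: the $\ell$ minimum-degree cliques cost only $\delta$ vertices each rather than $\delta+1$.

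Next I would build the dominating set. Fix a labelling $V(G)=\{x_1,\dots,x_n\}$ so that (possibly after renumbering) the degree-$\delta$ vertices are handled uniformly; then for each $i$, if $\deg_G(x_i)=\delta$ put $S_i=X_i$ (so $|S_i|=\delta$), and if $\deg_G(x_i)>\delta$ choose a $(\delta+1)$-subset $S_i\subseteq X_i$. As in the proof of Theorem~\ref{LU.bounds} these subsets must be chosen coherently across blue edges so that the ``one missing'' vertex of each large clique is still blue-dominated: concretely, for a clique $X_i$ with $\deg_G(x_i)>\delta$, the at most one vertex $w\in X_i\setminus S_i$ needs $\delta$ neighbours in $S:=\bigcup_j S_j$, and it has $\delta$ red neighbours already inside $S_i\cup\{w\}\cap S = X_i\setminus\{w\}$ wait — it has $\deg_G(x_i)-1\ge\delta$ red neighbours in $X_i$, of which $|S_i|=\delta+1$ lie in $S$ unless $w$ is red-adjacent to all of them, which it is; so $w$ already has $\ge\delta$ red neighbours in $S$. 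Every vertex $v$ that \emph{is} in some $S_i$ likewise has $|S_i|-1\ge\delta-1$ red neighbours in $S$ when $|S_i|=\delta+1$, and exactly $\delta-1$ when $S_i=X_i$ with $|X_i|=\delta$; in the latter (tight) case $v$ needs one more, supplied by its unique blue neighbour, so the coherence requirement is precisely that for every degree-$\delta$ vertex $x_i$ and every $v=x_ix_j\in X_i$, the blue partner $x_jx_i$ lies in $S_j$.

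So the real content is a small combinatorial bookkeeping step: show one can pick the $(\delta+1)$-subsets $S_j$ of the large cliques, and arrange the forced cliques, so that for every tight clique $X_i$ all $\delta$ of the blue partners $x_jx_i$ ($x_j\in N_G(x_i)$) are selected — and then count $|S|=\sum_i|S_i| = \delta\ell + (\delta+1)(n-\ell) = n(\delta+1)-\ell$. I expect the main obstacle to be exactly this coherence: when two degree-$\delta$ vertices are adjacent, or when a large clique is adjacent to many tight cliques, one must check the blue partners can be absorbed without exceeding $\delta+1$ per large clique; the escape is that a tight clique $X_i=S_i$ already contributes \emph{all} its vertices, so every blue edge incident to a tight clique automatically has its $X_i$-endpoint in $S$, and for a large clique $X_j$ one only needs to ensure the $\le\deg_G(x_j)$ blue partners that must be in $S_j$ number at most $\delta+1$ — which, if it fails, is repaired by instead forcing $X_j$ into $S$ too and re-running the count, never increasing it beyond $n(\delta+1)-\ell$. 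Once this case analysis is dispatched, $S$ is a $\delta$TDS of the claimed size and the corollary follows.
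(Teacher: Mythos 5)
Your plan correctly isolates the one nontrivial point: the ``coherence'' requirement that for every tight clique $X_i$ (one with $\deg_G(x_i)=\delta$, so that $S_i=X_i$ and each of its vertices has only $\delta-1$ red neighbours inside $S_i$) all $\delta$ blue partners $x_jx_i$ must also lie in $S$. But the step you defer as small combinatorial bookkeeping is exactly where the argument collapses, and the repair you sketch does not work. If a large clique $X_j$ is blue-adjacent to $t>\delta+1$ tight cliques, then $S_j$ is forced to contain at least $t$ vertices; replacing $S_j$ by all of $X_j$ only makes $|S_j|=\deg_G(x_j)\geq t>\delta+1$, so the total strictly exceeds $\delta\ell+(\delta+1)(n-\ell)=n(\delta+1)-\ell$ and the count is never restored.

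In fact the stated upper bound is false, so no choice of the sets $S_j$ can succeed. Take $G=K_{2,4}$ with parts $\{u,w\}$ and $\{x_1,\dots,x_4\}$: here $\delta=2$, $n=6$, $\ell=4$, and the claimed bound is $\gamma_{\times 2,t}(G_I)\leq 6\cdot 3-4=14$. But in $G_I$ the vertex $x_iu$ has degree exactly $2$ (one red neighbour $x_iw$, one blue neighbour $ux_i$), so any double total dominating set must contain its entire neighbourhood; running over all $x_iu$ and $x_iw$ forces $S=V(G_I)$, whence $\gamma_{\times 2,t}(G_I)=2m(G)=16>14$. (This is consistent with the remark following Proposition \ref{K_p,q} that $\gamma_{\times p,t}((K_{p,q})_I)=2pq$.) For comparison, the paper offers even less than you do: the corollary is asserted solely on the observation that tight cliques lie in every $\delta$TDS, which supports the lower bound $n\delta$ but not the upper bound. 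Your write-up is the more honest of the two in that it surfaces the obstruction explicitly, but the obstruction is fatal rather than dispatchable; the bound can only hold under an additional hypothesis controlling how many minimum-degree neighbours a vertex of larger degree may have.
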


Now, we characterize graphs $G$ of order $n$ that the $k$-tuple
total domination number of their inflation is $nk$ or $nk+1$. First
we give the next two new definitions.

\textbf{Two new definitions:}

We know that a graph $G$ is a \emph{Hamiltonian graph} if it has a
\emph{Hamiltonian cycle}, that is, a cycle that contains all
vertices of the graph. We extend this definition in such a way: a
graph $G$ is a \emph{Hamiltonian-like decomposable graph} if there
are disjoint Hamiltonian subgraphs $G_{1}$, $G_{2}$, ..., $G_{t}$ of
$G$ such that $V(G)=V(G_{1})\cup V(G_{2})\cup ...\cup V(G_{t})$. A
such partition we call a \textit{Hamiltonian-like decomposition} of
$G$ and simply we write $G=HLD(G_{1},G_{2},...,G_{t})$. In
generally, for each integer $k\geq 1$, we say that a graph $G$ is a
$k$-\emph{Hamiltonian-like decomposable graph}, briefly kHLD-graph,
if it has $k$ Hamiltonian-like decomposition
$G=HLD(G_{1}^{(i)},G_{2}^{(i)},...,G_{t_{i}}^{(i)})$ of Hamiltonian
subgraphs (where $1\leq i\leq k$) such that for every two distinct
Hamiltonian subgraphs $G_{s_{i}}^{(i)}$ and $G_{s_{j}}^{(j)}$, their
Hamiltonian cycles $C_{s_{i}}^{(i)}$ and $C_{s_{j}}^{(j)}$ are
disjoint. We note that $1$-Hamiltonian-like decomposable graph is
the same Hamiltonian-like decomposable graph.

A $k$-Hamiltonian-like decomposable graph $G$, we call kHLPM-graph
or kHLMM-graph if $G$ has respectively a perfect or maximum matching
$M$ with cardinal $\lfloor n/2\rfloor $ such that for each partition
$G=HLD(G_{1}^{(i)},G_{2}^{(i)},...,G_{t_{i}}^{(i)})$ of Hamiltonian
subgraphs (where $1\leq i\leq k$), $M$ satisfies in the following
condition:
\begin{equation}
\begin{array}{lll}
M\cap E(C_{\ell_{i}}^{(i)})=\emptyset ,\mbox{ for each }%
1\leq \ell_{i}\leq t_{i},
\end{array}  \label{eqq}
\end{equation}
where $C_{\ell_{i}}^{(i)}$ is the Hamiltonian cycle of
$G_{\ell_{i}}^{(i)}$.

The next two theorems characterize graphs $G$ with $\gamma _{\times
k,t}(G_{I})=nk$.

\begin{theorem}
\label{gamma=2kn} Let $G$ be a graph of order $n$ and let $1\leq 2k\leq \delta $. Then $%
\gamma _{\times (2k),t}(G_{I})=2kn$ if and only if $G$ is a
kHLD-graph.
\end{theorem}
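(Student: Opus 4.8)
The plan is to prove both directions by explicitly translating a $2kn$-sized $2k$-tuple total dominating set of $G_I$ into a combinatorial structure on $G$ and back. Recall from the lower-bound argument in Theorem~\ref{LU.bounds} that any kTDS $S$ of $G_I$ satisfies $|S\cap X_i|\ge k'$ for a $k'$TDS; applied with $k'=2k$ this forces $|S\cap X_i|\ge 2k$ for every $i$. Hence $\gamma_{\times(2k),t}(G_I)=2kn$ holds if and only if there is a $2k$TDS $S$ with $|S\cap X_i|=2k$ for every $i$; equivalently, $S$ meets each red clique $X_i$ in exactly $2k$ vertices, and every vertex $v$ of $G_I$ has at least $2k$ neighbours in $S$. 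The first move is to analyze, for a fixed vertex $v\in X_i$, where its $2k$ required neighbours come from: $v$ has exactly one blue neighbour (outside $X_i$) and $|X_i|-1=d_G(x_i)-1$ red neighbours inside $X_i$. So the count $|N_{G_I}(v)\cap S|$ equals $|S\cap X_i\setminus\{v\}|$ plus $0$ or $1$ depending on whether $v$'s blue partner lies in $S$; that is, $2k-1$ or $2k$ respectively.

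Next I would extract the key local condition. Since $|S\cap X_i|=2k$, a vertex $v\in X_i$ has $|S\cap X_i\setminus\{v\}|=2k-1$ if $v\in S$ and $=2k$ if $v\notin S$. Combining with the blue edge: if $v\notin S$ it is automatically $2k$-dominated; if $v\in S$ it needs its blue partner to also be in $S$. Therefore $S$ being a $2k$TDS with $|S\cap X_i|=2k$ for all $i$ is \emph{equivalent} to the statement that the blue edges joining two $S$-vertices form a perfect matching on $\bigcup_i (S\cap X_i)$ — i.e., for each $i$, the $2k$ chosen vertices of $X_i$ pair up under blue edges with $2k$ chosen vertices of neighbouring cliques. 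Each $X_i$ thus contributes $2k$ of its $d_G(x_i)$ "ports'' (blue half-edges), and the chosen blue edges form a $2k$-regular subgraph $H$ of $G$ spanning $V(G)$. Conversely, a spanning $2k$-regular subgraph $H$ of $G$ yields such an $S$ by letting $S\cap X_i$ be the $2k$ endpoints in $X_i$ of the $H$-edges at $x_i$. So the theorem reduces to: \emph{$G$ has a spanning $2k$-regular subgraph iff $G$ is a kHLD-graph}.

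For that reduction I would invoke Petersen's $2$-factor theorem: a $2k$-regular graph decomposes into $k$ edge-disjoint $2$-factors, and each $2$-factor is a disjoint union of cycles, i.e.\ a Hamiltonian-like decomposition in the paper's sense. Conversely, $k$ Hamiltonian-like decompositions of $G$ with pairwise edge-disjoint cycles furnish $k$ edge-disjoint $2$-factors of $G$, whose union is a spanning $2k$-regular subgraph. One must check the "disjoint Hamiltonian cycles'' clause in the definition of kHLD-graph corresponds exactly to the edge-disjointness needed for the $2$-factors to superpose into a $2k$-regular graph, and that $2k\le\delta$ guarantees such a subgraph can exist inside $G$. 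The main obstacle I anticipate is the bookkeeping in the equivalence "$|S\cap X_i|=2k$ for all $i$ and $S$ is a $2k$TDS'' $\Longleftrightarrow$ "the in-$S$ blue edges form a perfect matching on $S$'': one has to argue carefully that no vertex of $S$ can be "over-served'' in a way that lets another vertex be "under-served'', but the clean bound $|S\cap X_i|\le 2k$ combined with the single-blue-neighbour structure makes this essentially forced. Once that equivalence is pinned down, Petersen's theorem closes both directions with no further computation.
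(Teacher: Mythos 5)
Your proposal is correct, and for the converse it takes a genuinely cleaner route than the paper. Both arguments share the same skeleton: the lower bound forces $\mid S\cap X_i\mid\geq 2k$ in every red clique, so $\gamma_{\times(2k),t}(G_I)=2kn$ holds exactly when some $2k$TDS meets each $X_i$ in exactly $2k$ vertices, and the only way such an $S$ can dominate its own members is for every vertex of $S$ to have its blue partner in $S$. The paper exploits this implicitly: in the forward direction it builds $S$ by taking, at each vertex of each Hamiltonian cycle, the two clique-vertices corresponding to the two incident cycle edges; in the converse it simply asserts that each $S\cap X_i$ ``may be partitioned'' into $k$ pairs $D_j^{(i)}$ so that $D_j^{(1)}\cup\dots\cup D_j^{(n)}$ is a disjoint union of cycles, without justifying why such a globally consistent partition exists. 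Your reformulation --- the blue edges internal to $S$ project to a spanning $2k$-regular subgraph $H$ of $G$, and Petersen's $2$-factorization theorem splits $H$ into $k$ edge-disjoint $2$-factors, each of which is precisely a Hamiltonian-like decomposition --- supplies exactly the missing lemma, and your observation that the ``disjoint Hamiltonian cycles'' clause in the definition of a kHLD-graph amounts to edge-disjointness of the $2$-factors is the right reading of the paper's (somewhat ambiguous) definition. The one caveat: your side remark that $2k\leq\delta$ ``guarantees such a subgraph can exist'' is not right --- minimum degree $2k$ does not by itself yield a spanning $2k$-regular subgraph (that existence is exactly the condition being characterized); the hypothesis $2k\leq\delta$ is only needed so that $\gamma_{\times(2k),t}(G_I)$ is defined. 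Since your main equivalence chain never uses that remark, the argument stands.
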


\begin{proof}
Let $V(G)=\{x_{i}\mid 1\leq i\leq n\}$. For each $1\leq i\leq k$ and some $t_{i}\geq 1$, let $%
G=HLD(G_{1}^{(i)},G_{2}^{(i)},...,G_{t_{i}}^{(i)})$ be a
Hamiltonian-like decomposition of $G$. For each $1\leq i\leq k$ and
each $1\leq \ell_{i}\leq t_{i}$, let
$C_{\ell_{i}}^{(i)}:x_{1}^{(i)}x_{2}^{(i)}...x_{c_{i,\ell_{i}}}^{(i)}$\
be the Hamiltonian cycle of $G_{\ell_{i}}^{(i)}$. Set
\begin{equation*}
S_{i,\ell_{i}}=\{x_{m}^{(i)}x_{m-1}^{(i)},x_{m}^{(i)}x_{m+1}^{(i)}\mid
1\leq m\leq c_{i,\ell_{i}}\}.
\end{equation*}
Then every $S^{(i)}=S_{i,1}\cup S_{i,2}\cup ...\cup S_{i,t_{i}}$ is
a DTDS of $G_{I}$ with cardinal $2n$. Since $G$ is
$k$-Hamiltonian-like decomposable, then every two distinct
$S^{(i)}$\ and $S^{(\ell)}$ are disjoint and hence
$S^{(1)}\cup S^{(2)}\cup ...\cup S^{(k)}$ is a 2kTDS of $G_{I}$%
 with cardinal $2kn$. Thus $\gamma _{\times (2k),t}(G_{I})\leq 2kn$
and Theorem \ref{LU.bounds} follows $\gamma _{\times
(2k),t}(G_{I})=2kn$.

Conversely, let $\gamma _{\times (2k),t}(G_{I})=2kn$ and let $S$ be a $%
\gamma _{\times (2k),t}(G_{I})$-set. Since for each $1\leq i\leq n$,
$\mid
S\cap X_{i}\mid =2k$, then we may partition every $S\cap X_{i}$\ to $k$ $2$%
-subsets $D_{j}^{(i)}$, where $1\leq j\leq k$, such that
$D_{j}^{(1)}\cup D_{j}^{(2)}\cup ...\cup D_{j}^{(n)}$ is a union of
some disjoint cycles. Without loss of generality, we may assume that
$D_{j}^{(1)}\cup D_{j}^{(2)}\cup ...\cup D_{j}^{(n)}$\ is the cycle
\begin{equation*}
C_{j}:x_{1}x_{n},x_{1}x_{2};x_{2}x_{1},x_{2}x_{3};x_{3}x_{2},x_{3}x_{4};...;x_{n}x_{n-1},x_{n}x_{1}.
\end{equation*}
Then $G$ has the corresponding cycle $C_{j}^{\prime
}:x_{1}x_{2}x_{3}x_{4}...x_{n}$. Thus for every partition
$D_{j}^{(1)}\cup
D_{j}^{(2)}\cup ...\cup D_{j}^{(n)}$ there is a corresponding partition $%
G=HLD(G_{1}^{(i)},G_{2}^{(i)},...,G_{t_{i}}^{(i)})$\ of Hamiltonian
subgraphs $G_{1}^{(i)}$, $G_{2}^{(i)}$, ... and $G_{t_{i}}^{(i)}$,
and so $G$ is a kHLD-graph.
\end{proof}

\begin{theorem}
\label{gamma=(2k+1)n} Let $G$ be a graph of order $n$ and let $1\leq
2k+1\leq \delta $. Then $\gamma _{\times (2k+1),t}(G_{I})=(2k+1)n$
if and only if $G$ is a kHLPM-graph.
\end{theorem}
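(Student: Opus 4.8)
The plan is to mimic the structure of the proof of Theorem~\ref{gamma=2kn}, splitting a $(2k+1)$-tuple total dominating set into $k$ ``cycle layers'' of size $2n$ plus one extra ``matching layer'' of size $n$, the latter coming from the perfect matching $M$. For the forward direction, assume $G=\mathrm{kHLPM}$ with perfect matching $M$ of cardinality $n/2$ (so $n$ is even) satisfying condition~(\ref{eqq}). For each $1\leq i\leq k$ use the Hamiltonian-like decomposition $G=HLD(G_1^{(i)},\dots,G_{t_i}^{(i)})$ to build, exactly as in the previous proof, a DTDS $S^{(i)}$ of $G_I$ with $|S^{(i)}\cap X_j|=2$ for every $j$, and with $|S^{(i)}|=2n$; the kHLD hypothesis makes the $S^{(i)}$ pairwise disjoint, so $S^{(1)}\cup\cdots\cup S^{(k)}$ is a $2k$TDS with $|S\cap X_j|=2k$ for all $j$. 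Now I would add the matching layer: for each edge $x_ax_b\in M$ put the two blue-edge endpoints $x_ax_b\in X_a$ and $x_bx_a\in X_b$ into a set $T$, so $|T|=n$ and $|T\cap X_j|=1$ for each $j$ (since $M$ is a perfect matching every vertex lies on exactly one matching edge). Condition~(\ref{eqq}) guarantees $M$ shares no edge with any Hamiltonian cycle $C_{\ell_i}^{(i)}$, and this is exactly what forces $T$ to be disjoint from each $S^{(i)}$ (the vertices placed in $S^{(i)}$ are precisely the endpoints of the blue edges corresponding to edges of the cycles $C_{\ell_i}^{(i)}$). Hence $S\cup T$ is a $(2k+1)$TDS of $G_I$ of cardinality $2kn+n=(2k+1)n$: every vertex of $X_j$ gets $2k$ red neighbors from $S\cap(X_j\setminus\{v\})$ plus, after checking the endpoint bookkeeping, one more red neighbor from $T$ (or the blue neighbor when $v$ itself is the $T$-vertex — this small case needs care), and Theorem~\ref{LU.bounds} gives equality.

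For the converse, suppose $\gamma_{\times(2k+1),t}(G_I)=(2k+1)n$ and let $S$ be a $\gamma_{\times(2k+1),t}(G_I)$-set. By the lower-bound argument in Theorem~\ref{LU.bounds}, $|S\cap X_j|\geq 2k+1$ for every $j$, and counting forces $|S\cap X_j|=2k+1$ for all $j$. Each vertex $v\in X_j$ has all its neighbors inside $X_j$ except its unique blue neighbor, so for $v$ to have $\geq 2k+1$ neighbors in $S$ we need that either $X_j\setminus\{v\}\subseteq$-almost-all lies in $S$, or $v$'s blue partner lies in $S$; a parity/counting analysis of the bipartite-like incidence between the blue edges selected in $S$ and the red cliques should show that the selected blue edges of $G_I$ correspond in $G$ to a collection of cycles covering all vertices together with a perfect matching, and moreover the matching edges avoid all those cycles — which is precisely the kHLPM condition. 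Concretely, I would try to partition, at each $X_j$, the $(2k+1)$-set $S\cap X_j$ into $k$ pairs and one singleton in such a way that the pairs glue up (as in Theorem~\ref{gamma=2kn}) into $k$ Hamiltonian-like decompositions, while the singletons glue up into the blue edges of a perfect matching $M$; condition~(\ref{eqq}) then falls out because a blue edge used as a ``singleton'' at both its ends cannot simultaneously be an edge of any of the cycles built from the pairs.

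The main obstacle I anticipate is the consistency of the local-to-global gluing in both directions, i.e.\ showing that the singletons $S\cap X_j\setminus(\text{the }k\text{ pairs})$ can be chosen coherently so that $x_ax_b\in S\cap X_a$ forces exactly its partner $x_bx_a\in S\cap X_b$ (otherwise one gets a ``half-edge'' that is not a matching edge of $G$), and symmetrically that the choice of pairs does not interfere with this. In the proof of Theorem~\ref{gamma=2kn} this coherence was obtained because a $2$-subset of a red clique, once it is forced to sit inside a union of disjoint cycles in $G_I$, propagates uniquely; here the parity is odd, so one must argue that the ``leftover'' blue half-edges pair up, which is where the hypothesis that the matching $M$ exists with $|M|=\lfloor n/2\rfloor$ and condition~(\ref{eqq}) is really used. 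I would handle this by a careful degree/incidence count at each $X_j$ (each $X_j$ contributes exactly one blue edge, and $S\cap X_j$ has odd size $2k+1$), isolating the unique blue edge of $X_j$ that must lie in $S$ and checking it also lies in $S\cap X_{j'}$ at its other end.
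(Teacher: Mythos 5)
Your proposal follows essentially the same route as the paper's own proof: in the forward direction you build the $k$ disjoint cycle layers $S^{(i)}$ exactly as in Theorem~\ref{gamma=2kn} and add the blue-endpoint set $V(M_I)$ of the perfect matching, using condition~(\ref{eqq}) for disjointness; in the converse you split each $(2k+1)$-set $S\cap X_i$ into $k$ pairs forming Hamiltonian-like decompositions plus a leftover singleton per clique that assembles into a blue perfect matching satisfying~(\ref{eqq}). The gluing and bookkeeping issues you flag are real but are passed over in the paper as well (``one can verify'', ``it can be easily verified''), so your level of rigor matches the published argument.
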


\begin{proof}
Let $V(G)=\{x_{i}\mid 1\leq i\leq n\}$. Let $G$ be a kHLPM-graph. We
follow exactly the notation and terminology introduced in the first
and second paragraphs of the proof of Theorem \ref{gamma=2kn}. Then
similarly $S^{(1)}\cup S^{(2)}\cup ...\cup S^{(k)}$
is a 2kTDS of $G_{I}$\ with cardinal $2kn$. Set $M_{I}=%
\{(x_{i}x_{j},x_{j}x_{i})\mid x_{i}x_{j}\in M\}$. Since for every partition $%
G=HLD(G_{1}^{(i)},G_{2}^{(i)},...,G_{t_{i}}^{(i)})$\ of Hamiltonian
subgraphs, $M$ satisfies in the condition (\ref{eqq}), then
$V(M_{I})\cap
(S^{(1)}\cup S^{(2)}\cup ...\cup S^{(k)})=\emptyset $. One can verify that $%
V(M_{I})\cup S^{(1)}\cup S^{(2)}\cup ...\cup S^{(k)}$ is a (2k+1)TDS of $%
G_{I}$ with cardinal $(2k+1)n$. Thus $\gamma _{\times
(2k+1),t}(G_{I})\leq (2k+1)n$ and Theorem \ref{LU.bounds} follows
$\gamma _{\times (2k+1),t}(G_{I})=(2k+1)n$.

Conversely, let $\gamma _{\times (2k+1),t}(G_{I})=(2k+1)n$ and let
$S$ be a $\gamma _{\times (2k+1),t}(G_{I})$-set. Since for each
$1\leq i\leq n$, $\mid S\cap X_{i}\mid =2k+1$, then, similar to the
proof of Theorem \ref{gamma=2kn}, we
may partition every $S\cap X_{i}$\ to $k$ $2$-subsets $D_{j}^{(i)}$, where $%
1\leq j\leq k$, such that $D_{j}^{(1)}\cup D_{j}^{(2)}\cup ...\cup
D_{j}^{(n)}$ is a union of some disjoint cycles and there is a
corresponding partition
$G=HLD(G_{1}^{(i)},G_{2}^{(i)},...,G_{t_{i}}^{(i)})$ of Hamiltonian
subgraphs for it, and also $\cup _{1\leq i\leq n}(S-(\cup _{1\leq
j\leq k}D_{j}^{(i)}))$ makes a blue matching $M_{I}$\ in $G_{I}$ of
size $\lfloor n/2\rfloor $. It can be easily verified that
$M=\{x_{i}x_{j}\mid (x_{i}x_{j},x_{j}x_{i})\in M_{I}\}$ is a perfect
matching in $G$ that satisfies in the condition (\ref{eqq}), and so
$G$ is a kHLPM-graph.
\end{proof}

Theorems \ref{LU.bounds}, \ref{gamma=2kn} and \ref{gamma=(2k+1)n}
follow the next result.

\begin{theorem}
\label{gamma>=nk+1} Let $G$ be a graph of order $n$, and let $1\leq
k\leq \delta $. Then
\[
nk+1 \leq \gamma _{\times k,t}(G_{I})\leq n(k+1)-1
\]
if and only if either $k$ and $n$ are both odd or if $k$ is even or
odd, then respectively $G$ is not a kHLD- or kHLPM-graph.
\end{theorem}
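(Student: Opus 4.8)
The plan is to deduce the statement from the three earlier results by a straightforward case analysis on the parity of $k$ and, when needed, on the parity of $n$. First I would observe that Theorem~\ref{LU.bounds} already gives the upper bound $\gamma_{\times k,t}(G_I)\le n(k+1)-1$ unconditionally, and the lower bound $\gamma_{\times k,t}(G_I)\ge nk$; hence the displayed double inequality $nk+1\le \gamma_{\times k,t}(G_I)\le n(k+1)-1$ holds if and only if $\gamma_{\times k,t}(G_I)\ne nk$, i.e. if and only if $\gamma_{\times k,t}(G_I)\ge nk+1$. So the whole claim reduces to characterizing when $\gamma_{\times k,t}(G_I)=nk$ fails.

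Next I would split into the two parity cases for $k$. If $k$ is even, write $k=2k'$; Theorem~\ref{gamma=2kn} says $\gamma_{\times k,t}(G_I)=nk$ exactly when $G$ is a $k'$HLD-graph (which the theorem statement abbreviates, for even $k$, simply as ``$G$ is a kHLD-graph'' in the sense of the paper's indexing), so $\gamma_{\times k,t}(G_I)=nk$ fails precisely when $G$ is not such a graph. This matches the ``$k$ even $\Rightarrow$ $G$ is not a kHLD-graph'' branch of the stated equivalence. If $k$ is odd, write $k=2k'+1$; Theorem~\ref{gamma=(2k+1)n} says $\gamma_{\times k,t}(G_I)=nk$ exactly when $G$ is a $k'$HLPM-graph. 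Here I must be careful: a kHLPM-graph requires a perfect matching of size $\lfloor n/2\rfloor$ satisfying condition~(\ref{eqq}); if $n$ is odd there is no perfect matching on all $n$ vertices in the required sense, so no graph of odd order can be a kHLPM-graph, and hence $\gamma_{\times k,t}(G_I)>nk$ automatically. This gives the ``$k$ and $n$ both odd'' branch. If $k$ is odd but $n$ is even, then $\gamma_{\times k,t}(G_I)=nk$ fails precisely when $G$ is not a kHLPM-graph, giving the remaining branch.

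Assembling these cases: $nk+1\le\gamma_{\times k,t}(G_I)$ holds iff ($k$ is odd and $n$ is odd), or ($k$ is even and $G$ is not a kHLD-graph), or ($k$ is odd, $n$ is even, and $G$ is not a kHLPM-graph) — which, after noting that in the odd-odd case $G$ is trivially not a kHLPM-graph, collapses to the statement as written. I would then simply note that the displayed interval has width $n-2\ge 0$ (using $n\ge 2$, which follows from $\delta\ge k\ge 1$ and $G$ having no isolated vertices), so the chain of inequalities is non-vacuous and the upper bound half needs no further argument.

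The main obstacle, such as it is, is purely bookkeeping: making sure the parity split interacts correctly with the indexing convention (the paper's ``kHLD'' for even $k$ really means ``$(k/2)$HLD'' in the notation of Theorem~\ref{gamma=2kn}, and similarly for kHLPM and odd $k$), and confirming that odd order genuinely forces a graph out of the kHLPM class so that the ``$k$ and $n$ both odd'' clause is not redundant but rather subsumes the non-kHLPM condition. No new combinatorial construction is required; everything follows by combining Theorems~\ref{LU.bounds}, \ref{gamma=2kn}, and~\ref{gamma=(2k+1)n}.
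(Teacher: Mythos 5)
Your proposal is correct and follows exactly the route the paper intends: the paper gives no separate proof of this theorem, merely asserting that it follows from Theorems~\ref{LU.bounds}, \ref{gamma=2kn} and \ref{gamma=(2k+1)n}, and your parity case analysis (reducing the claim to ``$\gamma_{\times k,t}(G_I)\neq nk$'' and noting that odd order excludes the kHLPM property) is precisely the bookkeeping that deduction requires.
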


By closer look at the proofs of Theorems \ref{gamma=2kn} and
\ref{gamma=(2k+1)n} we have the following observation.

\begin{observation}
\label{obser} Let $k$ be an integer and let $G$ be a graph of order
$n$ with $\gamma _{\times k,t}(G)=nk$. Then for every $\gamma
_{\times k,t}(G_{I})$-set $S$, the induced subgraph $G_{I}[S]$ of
$S$ in $G_{I}$ contains a union of disjoint Hamiltonian cycles (of
some of the its subgraphs) and probably a perfect matching.
Therefore, if we reduce the number of vertices of $S$ in a red
clique of $G_{I}$ to less than $k$ vertices, then there is another
unique red clique $X$ of $G_{I}$\ and an unique vertex $w$\ of
$X\cap S$ such that $w$ is not $k$-tuple totally dominated by $S$.
\end{observation}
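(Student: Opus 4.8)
To prove the Observation, the plan is to read the internal structure of a minimum $k$-tuple total dominating set of $G_I$ off the converse halves of Theorems~\ref{gamma=2kn} and~\ref{gamma=(2k+1)n}, to add one local ``no slack'' fact about how each of its vertices is dominated, and then to settle the second (``therefore'') statement by a short count; throughout I use the hypothesis in the form $\gamma_{\times k,t}(G_I)=nk$ and take $k\ge 2$ as in this section. First I would show that any $\gamma_{\times k,t}(G_I)$-set $S$ meets each red clique $X_i$ in exactly $k$ vertices: by the lower-bound argument of Theorem~\ref{LU.bounds} every nonempty $S\cap X_i$ has at least $k$ elements, $S\cap X_i\ne\emptyset$ because any $u\in X_i$ must have at least $k\ge 2$ of its neighbours in $S$ while only one of its neighbours lies outside $X_i$, and since the red cliques partition $V(G_I)$ the chain $nk=\gamma_{\times k,t}(G_I)=|S|=\sum_i|S\cap X_i|\ge nk$ forces equality termwise. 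Given this, the argument in the converse parts of Theorems~\ref{gamma=2kn} and~\ref{gamma=(2k+1)n} partitions each $S\cap X_i$ into $\lfloor k/2\rfloor$ two-element blocks $D_1^{(i)},\dots,D_{\lfloor k/2\rfloor}^{(i)}$ (plus one leftover vertex when $k$ is odd) so that each family $D_j^{(1)}\cup\cdots\cup D_j^{(n)}$ induces a union of disjoint cycles in $G_I$ and, when $k$ is odd, the leftover vertices induce a blue perfect matching---which is exactly the first assertion of the Observation.

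The extra ingredient is a ``no slack'' fact. Writing $a'$ for the unique blue neighbour of a vertex $a$ of $G_I$ (so $a'$ lies in a red clique other than the one containing $a$), for $v\in S\cap X_i$ we have the disjoint decomposition $N_{G_I}(v)\cap S=\bigl(S\cap(X_i\setminus\{v\})\bigr)\cup(\{v'\}\cap S)$ with $|S\cap(X_i\setminus\{v\})|=k-1$; since $v$ is $k$-tuple totally dominated, this forces $v'\in S$ and $|N_{G_I}(v)\cap S|=k$. Hence every vertex of $S$ has its blue partner in $S$ and has exactly $k$ neighbours in $S$.

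Finally, suppose we delete a vertex $v\in S\cap X_i$, so that $S$ meets $X_i$ in $k-1<k$ vertices, and put $S'=S\setminus\{v\}$. The only vertices that can have fewer neighbours in $S'$ than in $S$ lie in $N_{G_I}(v)=(X_i\setminus\{v\})\cup\{v'\}$, and exactly one of them, $w:=v'$, lies in a red clique $X:=X_j$ with $X\ne X_i$. By the no-slack fact $w\in S$ and $|N_{G_I}(w)\cap S|=k$, one of those $k$ neighbours being $v$ along the blue edge $vw$, so $|N_{G_I}(w)\cap S'|=k-1$ and $w$ is not $k$-tuple totally dominated by $S'$. Every other vertex $u\in X_j\cap S$ has $u\ne w$, hence $v\notin N_{G_I}(u)$, so $u$ keeps all $k$ of its neighbours in $S'$; and $N_{G_I}(v)$ meets no red clique besides $X_i$ and $X_j$, so no vertex outside $X_i\cup X_j$ is affected. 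Thus $X_j$ is the unique red clique different from $X_i$, and $w=v'$ the unique vertex of $X_j\cap S$, that fails to be $k$-tuple totally dominated after the reduction. I expect the only delicate point to be exactly this last bookkeeping---keeping the deliberate, permitted damage inside $X_i$ apart from the single escaping vertex $w$ in $X_j$; once $|S\cap X_i|=k$ and the no-slack property are in hand, everything else is immediate.
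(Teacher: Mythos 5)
Your proposal is correct, and it is worth noting that the paper supplies no argument at all for this Observation: it is stated as following ``by closer look at the proofs'' of Theorems~\ref{gamma=2kn} and~\ref{gamma=(2k+1)n}, so the structural first assertion (disjoint cycles plus possibly a perfect matching inside $G_I[S]$) is simply inherited from the converse halves of those proofs, exactly as you do. Where you genuinely add something is the second assertion: your ``no slack'' lemma --- that $|S\cap X_i|=k$ for every $i$, that every $v\in S$ therefore has its blue partner in $S$, and that every vertex of $S$ has exactly $k$ neighbours in $S$ --- gives a short, self-contained proof of the uniqueness claim that does not use the Hamiltonian-cycle decomposition at all, only the equality case of the lower bound from Theorem~\ref{LU.bounds}. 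This is more elementary and more robust than what the paper gestures at, since the cycle-partition step in the converse of Theorem~\ref{gamma=2kn} is itself only sketched. Two small caveats: you correctly read the hypothesis as $\gamma_{\times k,t}(G_I)=nk$ (the Observation misprints it as $\gamma_{\times k,t}(G)=nk$), and your uniqueness conclusion requires interpreting ``reduce the number of vertices of $S$ in a red clique to less than $k$'' as deleting a single vertex --- deleting two or more would break blue partners in possibly several distinct cliques --- but that is the only reading under which the stated uniqueness can hold, so it is the right one.
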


The next theorem states an equivalent condition for $\gamma _{\times
k,t}(G_{I})=nk+1$, when $k$ and $n$ are both odd.

\begin{theorem}
\label{gamma=n(2k+1)+1} Let $G$ be a graph of odd order $n$ and let $1\leq 2k+1\leq \delta $. Then $%
\gamma _{\times (2k+1),t}(G_{I})=(2k+1)n+1$ if and only if $G$ is a
kHLMM-graph.
\end{theorem}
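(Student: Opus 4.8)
The plan is to mimic the two-directional argument used in Theorems \ref{gamma=2kn} and \ref{gamma=(2k+1)n}, the only new ingredient being the parity obstruction that forces exactly one extra vertex. Throughout write $V(G)=\{x_i\mid 1\le i\le n\}$.

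For the ``if'' direction, suppose $G$ is a kHLMM-graph. As in the proof of Theorem \ref{gamma=(2k+1)n}, the $k$ Hamiltonian-like decompositions yield a $2k$TDS $S^{(1)}\cup\dots\cup S^{(k)}$ of $G_I$ of cardinality $2kn$, consisting in each red clique $X_i$ of exactly $2k$ vertices arranged into $k$ disjoint cycles. It remains to $k$-tuple-totally dominate while adding the final layer. Since $n$ is odd, a maximum matching $M$ of $G$ has cardinality $\lfloor n/2\rfloor=(n-1)/2$ and saturates all but one vertex, say $x_n$. Put $M_I=\{(x_ix_j,x_jx_i)\mid x_ix_j\in M\}$; by condition (\ref{eqq}) the vertex set $V(M_I)$ is disjoint from $S^{(1)}\cup\dots\cup S^{(k)}$. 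Now $V(M_I)$ already supplies, in every red clique $X_i$ with $i\ne n$, one additional vertex that is blue-adjacent to a vertex of another clique; together with the $2k$ vertices from the cycles this gives every vertex of $G_I$ at least $2k+1$ neighbours in the set, \emph{except} possibly vertices of $X_n$, whose only extra blue-neighbour candidate is missing. The fix is to add a single vertex: pick any vertex $w\in X_n$, let $x_nx_j$ be its blue edge, and throw into the set the vertex $x_jx_n\in X_j$. One checks that $w$ now has $2k+1$ blue-and-red neighbours in the set and every other vertex still does, so $S=V(M_I)\cup S^{(1)}\cup\dots\cup S^{(k)}\cup\{x_jx_n\}$ is a $(2k{+}1)$TDS of cardinality $2kn+(n-1)/2\cdot 2+1=(2k{+}1)n+1$. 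With Theorem \ref{gamma=(2k+1)n} (which rules out equality $(2k{+}1)n$ because a kHLMM-graph of odd order has no perfect matching) and Theorem \ref{LU.bounds}, this forces $\gamma_{\times(2k+1),t}(G_I)=(2k{+}1)n+1$.

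For the ``only if'' direction, suppose $\gamma_{\times(2k+1),t}(G_I)=(2k{+}1)n+1$. As in the converse of Theorem \ref{gamma=(2k+1)n}, every $\gamma_{\times(2k+1),t}(G_I)$-set $S$ meets each $X_i$ in at least $2k+1$ vertices, so by a counting argument there is exactly one red clique, say $X_n$, with $\mid S\cap X_n\mid=2k+2$ and all others with $\mid S\cap X_i\mid=2k+1$. Using Observation \ref{obser}-type reasoning, the ``$2k+1$ out of each clique'' part splits off $k$ disjoint unions of cycles, giving $k$ Hamiltonian-like decompositions $G=HLD(G_1^{(i)},\dots,G_{t_i}^{(i)})$, and the leftover vertices of $S$ form a blue matching $M_I$ in $G_I$. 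Because the total count is $(2k{+}1)n+1=2kn+(n+1)$ and $k n$ vertices go into the cycles and the rest are matched in pairs plus the one surplus vertex in $X_n$, the matching $M_I$ has size $(n-1)/2=\lfloor n/2\rfloor$ and misses exactly one vertex in $X_n$; hence $M=\{x_ix_j\mid (x_ix_j,x_jx_i)\in M_I\}$ is a maximum matching of $G$ of cardinality $\lfloor n/2\rfloor$, which (since $n$ is odd) is not perfect. Condition (\ref{eqq}) for $M$ relative to each decomposition follows exactly as in Theorem \ref{gamma=(2k+1)n}, since the matching edges are vertex-disjoint from all the cycle vertices in $S$. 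Therefore $G$ is a kHLMM-graph.

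The main obstacle is the careful bookkeeping in the converse: one must show that the clique with the surplus vertex is \emph{unique} and that the leftover edges really form a matching of the full size $\lfloor n/2\rfloor$ rather than something smaller with more surplus vertices scattered around. This is where Observation \ref{obser} does the heavy lifting --- if a red clique had only $2k$ vertices of $S$, some vertex would fail to be $(2k{+}1)$-totally dominated, so the distribution $(2k{+}1,\dots,2k{+}1,2k{+}2)$ is forced; and the same observation shows the $2k$-vertex cores must organize into Hamiltonian cycles rather than arbitrary $2$-regular red/blue configurations. Everything else is a direct transcription of the arguments already given for Theorems \ref{gamma=2kn} and \ref{gamma=(2k+1)n}.
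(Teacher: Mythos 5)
Your ``only if'' direction follows the paper's argument essentially verbatim (forced distribution $(2k+1,\dots,2k+1,2k+2)$ over the red cliques, extraction of $k$ cycle systems plus a leftover blue matching of size $\lfloor n/2\rfloor$ missing $x_n$), so that half is fine. The problem is in your ``if'' direction, where your repair of the clique $X_n$ does not work. After forming $S^{(1)}\cup\dots\cup S^{(k)}\cup V(M_I)$, every vertex of $X_n$ that lies on one of the $k$ cycles has exactly $2k-1$ red neighbours in the set (the other $2k-1$ cycle vertices of $X_n$) plus its one blue neighbour (its cycle partner, already in the set), for a total of only $2k$. Adding the single vertex $x_jx_n\in X_j$ helps none of these $2k$ vertices: $x_jx_n$ is not red-adjacent to anything in $X_n$, and each vertex of $G_I$ has exactly one blue neighbour, which for the cycle vertices of $X_n$ is already in the set. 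So your set leaves all $2k$ cycle vertices of $X_n$ with only $2k$ neighbours in $S$. Your cardinality count also betrays this: $2kn+(n-1)+1=(2k+1)n$, not $(2k+1)n+1$, and by Theorem \ref{gamma=(2k+1)n} no set of size $(2k+1)n$ can be a $(2k+1)$TDS here, since an odd-order graph has no perfect matching.

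The paper's construction instead adds \emph{two} vertices $\alpha,\beta\in X_n\setminus(S^{(1)}\cup\dots\cup S^{(k)})$, i.e.\ two vertices of $X_n$ itself outside the cycle part. This raises the count for each cycle vertex of $X_n$ from $2k$ to $2k+2$... more precisely from $2k-1+1$ to $2k+1+1\ge 2k+1$, gives $\alpha$ and $\beta$ each $2k+1$ neighbours ($2k$ cycle vertices plus the other added vertex), and yields the correct cardinality $2kn+(n-1)+2=(2k+1)n+1$. Replacing your one-vertex patch by this two-vertex patch (and correcting the arithmetic) recovers the paper's proof; the rest of your argument, including using the kHLPM characterization to rule out the value $(2k+1)n$, is sound.
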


\begin{proof}
Let $V(G)=\{x_{i}\mid 1\leq i\leq n\}$. Let $G$ be a kHLMM-graph.
Without loss of generality, we may assume that $M$ does not saturate
$x_{n}$. For each $1\leq i\leq k$ and $t_{i}\geq 1$, let
$G=HLD(G_{1}^{(i)},G_{2}^{(i)},...,G_{t_{i}}^{(i)})$ be a
Hamiltonian-like decomposition. For each $1\leq \ell_{i}\leq
t_{i}$, let $C_{\ell_{i}}^{(i)}:x_{1}^{(i)}x_{2}^{(i)}...x_{c_{i,\ell_{i}}}^{(i)}$%
\ be a Hamiltonian cycle for $G_{\ell_{i}}^{(i)}$. Set
\begin{equation*}
S_{i,\ell_{i}}=\{x_{m}^{(i)}x_{m-1}^{(i)},x_{m}^{(i)}x_{m+1}^{(i)}\mid
1\leq m\leq c_{i,\ell_{i}}\}.
\end{equation*}
Then every $S^{(i)}=S_{i,1}\cup S_{i,2}\cup ...\cup S_{i,t_{i}}$ is
a DTDS of $G_{I}$ with cardinal $2n$. Since $G$ is
$k$-Hamiltonian-like decomposable, then every two distinct $S^{(i)}$
and $S^{(\ell)}$ are disjoint and hence $S^{(1)}\cup S^{(2)}\cup
...\cup S^{(k)}$ is a 2kTDS of $G_{I}$ with cardinal $2kn$. Set
$M_{I}=\{(x_{i}x_{j},x_{j}x_{i})\mid x_{i}x_{j}\in M\}$. Since for
each partition $G=HLD(G_{1}^{(i)},G_{2}^{(i)},...,G_{t_{i}}^{(i)})$
of Hamiltonian subgraphs, $M$ satisfies in the condition
(\ref{eqq}), then $V(M_{I})\cap (S^{(1)}\cup S^{(2)}\cup ...\cup
S^{(k)})=\emptyset $. One can verify that for every two arbitrary
vertices $\alpha ,\beta \in X_{n}-(S^{(1)}\cup S^{(2)}\cup ...\cup
S^{(k)})$, the set $V(M_{I})\cup S^{(1)}\cup S^{(2)}\cup ...\cup
S^{(k)}\cup \{\alpha ,\beta \}$ is a $(2k+1)$TDS of $G_{I}$ with
cardinal $(2k+1)n+1$. Thus $\gamma _{\times (2k+1),t}(G_{I})\leq
(2k+1)n+1$ and Theorem \ref{gamma>=nk+1} follows $\gamma _{\times
(2k+1),t}(G_{I})=(2k+1)n+1$.

Conversely, let $\gamma _{\times (2k+1),t}(G_{I})=(2k+1)n+1$ and let
$S$ be a $\gamma _{\times (2k+1),t}(G_{I})$-set. Without loss of
generality, we may assume that for each $1\leq i\leq n-1$, $\mid
S\cap X_{i}\mid =2k+1$ and $\mid S\cap X_{n}\mid =2k+2$. Similar to
the proofs of the previous theorems, we may partition every $S\cap
X_{i}$\ to $k$ $2$-subsets $D_{j}^{(i)}$, where $1\leq j\leq k$,
such that $D_{j}^{(1)}\cup D_{j}^{(2)}\cup ...\cup D_{j}^{(n)}$ is a
union of some disjoint cycles and there is a corresponding partition $%
G=HLD(G_{1}^{(i)},G_{2}^{(i)},...,G_{t_{i}}^{(i)})$ of Hamiltonian
subgraphs for it, and also $\cup _{1\leq i\leq n-1}(S-(\cup _{1\leq
j\leq k}D_{j}^{(i)}))$ makes a blue matching $M_{I}$\ in $G_{I}$ of
size $\lfloor n/2\rfloor $. It can be easily verified that
$M=\{x_{i}x_{j}\mid (x_{i}x_{j},x_{j}x_{i})\in M_{I}\}$ is a maximum
matching in $G$ of size $\lfloor n/2\rfloor $\ such that does not
saturate $x_{n}$ and for every partition
$G=HLD(G_{1}^{(i)},G_{2}^{(i)},...,G_{t_{i}}^{(i)})$ of Hamiltonian
subgraphs it satisfies in the condition (\ref{eqq}), and so $G$ is a
kHLMM-graph.
\end{proof}

%%%%%%%%%%%%%%%%%%%%%%%%%%%%%%%%%%%%%%%%%%%%%%%%%%%%%%%%%%%%%%%%%%%%%%%%%%%%%%%%%%%%%%%%%%%%%5
\section{$k$-tuple total domination number in the inflation of a\\
connected graph which has a cut-edge or cut-vertex}

In the next theorem we give upper and lower bounds for the $k$-tuple
total domination number of the inflation of a graph $F$ which
contains a cut-edge $e,$ in terms on the $k$-tuple total domination
numbers of the inflation of the components of $F-e$.

\begin{theorem}
\label{LUbounds,cutedge} Let $F$ be a graph with a cut-edge $e$ such
that $G$ and $H$ are the components of $F-e$. If $2\leq k\leq \min
\{\delta (G),\delta (H)\}$, then
\begin{equation*}
\gamma _{\times k,t}(G_{I})+\gamma _{\times k,t}(H_{I})-k\leq \gamma
_{\times k,t}(F_{I})\leq \gamma _{\times k,t}(G_{I})+\gamma _{\times
k,t}(H_{I}).
\end{equation*}
\end{theorem}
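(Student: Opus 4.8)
The plan is to prove the two inequalities separately, using the structure of the inflated graph near the cut-edge $e = x_i y_j$, where $x_i \in V(G)$ and $y_j \in V(H)$. In $F_I$, the blue edge replacing $e$ joins a vertex $x_i y_j$ in the red clique $X_i$ (now of size $\deg_F(x_i) = \deg_G(x_i)+1$) to a vertex $y_j x_i$ in the red clique $Y_j$ (now of size $\deg_H(y_j)+1$); every other red clique and every other blue edge of $F_I$ coincides with the corresponding one of $G_I$ or $H_I$. So $F_I$ is almost the disjoint union $G_I \cup H_I$, except that the cliques $X_i$ and $Y_j$ each gain one new vertex and those two new vertices are joined by a blue edge.

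For the upper bound $\gamma_{\times k,t}(F_I) \le \gamma_{\times k,t}(G_I) + \gamma_{\times k,t}(H_I)$, I would take a $\gamma_{\times k,t}(G_I)$-set $S_G$ and a $\gamma_{\times k,t}(H_I)$-set $S_H$ and show that (a suitable image of) $S_G \cup S_H$ is a $k$TDS of $F_I$. The only vertices whose domination could fail are the two new vertices $x_i y_j$ and $y_j x_i$ and the vertices of $X_i$ and $Y_j$. Since $|S_G \cap X_i| \ge k$ in $G_I$ (Theorem~\ref{LU.bounds}'s argument: any vertex of a red clique has all its red-clique neighbours forced into the set), the new vertex $x_i y_j \in X_i$ is red-adjacent to at least $k$ members of $S_G$; the same holds for $y_j x_i$ via $S_H$. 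Vertices already in $X_i$ keep their $k$ dominators inside $X_i \cap S_G$ (the new vertex only enlarges $X_i$), and likewise for $Y_j$. One must be a little careful that $S_G$ is a valid $k$TDS of $G_I$ only if $\delta(G) \ge k$, which is exactly the hypothesis; the pendant-clique degeneracies do not arise since $k \ge 2$ forces $\delta \ge 2$. This direction is routine.

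For the lower bound, I would start from a $\gamma_{\times k,t}(F_I)$-set $S$ and split it as $S_G' = S \cap V(G_I)$ (the part lying in the red cliques coming from $G$, with the new vertex $x_i y_j$ assigned to $G$'s side) and $S_H'$ (the rest). The key observation, as in the proof of Theorem~\ref{LU.bounds}, is that for each red clique $X_\ell$ one has $|S \cap X_\ell| \ge k$; in particular $|S_G' \cap X_i| \ge k$ and $|S_H' \cap Y_j| \ge k$. Now $S_G'$ $k$-tuple-totally-dominates every vertex of $G_I$ \emph{except possibly} the old vertices of $X_i$, which in $F_I$ might have been relying on the blue dominator $y_j x_i$ living on $H$'s side — but that blue edge only reaches the single vertex $x_i y_j$, so in fact every vertex of $X_i$ other than $x_i y_j$ is dominated within $X_i$ or by its own blue partner, all of which lie in $S_G'$. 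The genuine deficiency is at the single vertex $x_i y_j$: in $F_I$ it may have up to one dominator ($y_j x_i$) on the wrong side. Hence adding at most $k$ well-chosen vertices of $X_i$ to $S_G'$ (so that $x_iy_j$ gets $k$ red neighbours in the set) turns $S_G'$ into a $k$TDS of $G_I$, and symmetrically for $H_I$; but the $k$ vertices of $X_i \cap S$ are shared, so $|S| = |S_G'| + |S_H'| \ge (\gamma_{\times k,t}(G_I) - k) + \gamma_{\times k,t}(H_I)$ after accounting once for the overlap. Balancing the bookkeeping so the $-k$ appears exactly once is the main obstacle: I expect to argue that $S_G' \cup (X_i\cap S)$ is a $k$TDS of $G_I$ while $S_H'$ already has $|S_H' \cap Y_j| \ge k$ and needs no repair, giving $\gamma_{\times k,t}(G_I) + \gamma_{\times k,t}(H_I) \le (|S_G'| + k) + |S_H'| = |S| + k = \gamma_{\times k,t}(F_I) + k$, which rearranges to the claim. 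Care must be taken that $X_i \cap S$ really has size $\ge k$ and that none of its vertices is the problematic $x_iy_j$ unless $x_iy_j$ itself already has a red dominator — a short case check finishes it.
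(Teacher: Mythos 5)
Your upper bound argument is correct and coincides with the paper's: $S_G\cup S_H$ is a kTDS of $F_I$ because $F_I[V(G_I)]=G_I$, $F_I[V(H_I)]=H_I$, and the two new vertices $x_iy_j$ and $y_jx_i$ are red-adjacent to the at least $k$ vertices of $S_G\cap X_i$ and $S_H\cap Y_j$ respectively. The lower bound, however, has genuine gaps, and they occur exactly where the paper has to work: when the minimum set $S$ contains one or both of the new vertices $x_iy_j$, $y_jx_i$.

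First, your claim that every vertex of $X_i$ other than $x_iy_j$ is already dominated within $X_i$ or by its own blue partner, all lying in $S_G'$, is false: if $x_iy_j\in S$, the old vertices of $X_i$ may be using $x_iy_j$ as one of their $k$ \emph{red} dominators in the enlarged clique $X_i'=X_i\cup\{x_iy_j\}$, and that vertex is not in $V(G_I)$, so in $G_I$ they can be left with only $k-1$ dominators. (Conversely, $x_iy_j$ itself is not a vertex of $G_I$ and needs no domination there, so it is not ``the genuine deficiency.'') Second, your proposed repair of adding at most $k$ well-chosen vertices \emph{of $X_i$} can be impossible: in the extremal case $\deg_G(x_i)=k$ one can have $X_i\subseteq S$ already, so there is nothing left in $X_i$ to add; the paper's repair in that subcase adds the $k$ blue partners $x_jx_i$, which lie in \emph{other} red cliques. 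Third, the assertion that $S_H'$ needs no repair because $|S_H'\cap Y_j|\geq k$ is unjustified: the clique inequality in $F_I$ only gives $|S\cap Y_j'|\geq k$, so when $y_jx_i\in S$ you may have $|S\cap Y_j|=k-1$ and $H$'s side needs fixing as well. The paper closes all of this with a case analysis on $|S\cap\{x_iy_j,y_jx_i\}|\in\{0,1,2\}$ and on $\deg_G(x_i)$, $\deg_H(y_j)$, exploiting the fact that when a tight clique forces membership it also forces the relevant blue partners into $S$, so that the total repair cost stays at most $k$ and is partly offset by the one or two vertices of $S$ lying outside $V(G_I)\cup V(H_I)$ --- vertices your bookkeeping identity $|S|=|S_G'|+|S_H'|$ also overlooks. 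The strategy is the right one, but the repair step needs this case analysis to actually go through.
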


\begin{proof}
Let $V(G)=\{x_{i}\mid 1\leq i\leq n\}$, $V(H)=\{y_{i}\mid 1\leq i\leq m\}$.
Without loss of generality, we may assume that $e=x_{1}y_{1}$. Then $%
V(F_{I})=V(G_{I})\cup V(H_{I})\cup \{x_{1}y_{1},y_{1}x_{1}\}$ and

\begin{equation*}
\begin{array}{lll}
E(F_{I}) & = & E(G_{I})\cup E(H_{I})\cup \{(x_{1}x_{j},x_{1}y_{1})\mid
x_{1}x_{j}\in X_{1}\} \\
& \cup & \{(y_{1}y_{j},y_{1}x_{1})\mid y_{1}y_{j}\in Y_{1}\}\cup
\{(x_{1}y_{1},y_{1}x_{1})\}.
\end{array}
\end{equation*}
Also let $X_{1}^{\prime }=X_{1}\cup \{x_{1}y_{1}\}$ and
$Y_{1}^{\prime }=Y_{1}\cup \{y_{1}x_{1}\}$. Let $S_{G}$ and $S_{H}$
be respectively $\gamma _{\times k,t}(G_{I})$-set and $\gamma
_{\times k,t}(H_{I})$-set. Since $S_{G}\cup S_{H}$ is a kTDS of
$F_{I}$ with cardinal $\gamma _{\times k,t}(G_{I})+\gamma _{\times
k,t}(H_{I})$, then $\gamma _{\times k,t}(F_{I})\leq \gamma _{\times
k,t}(G_{I})+\gamma _{\times k,t}(H_{I})$.

Let now $S_{F}$ be a $\gamma _{\times k,t}(F_{I})$-set. If $%
S_{F}\cap \{x_{1}y_{1},y_{1}x_{1}\}=\emptyset $, then $S_{F}\cap
V(G_{I})$ and $S_{F}\cap V(H_{I})$ are respectively $k$-tuple total
dominating sets of $G_{I}$\ and $H_{I}$ and hence

\begin{equation*}
\begin{array}{lll}
\gamma _{\times k,t}(G_{I})+\gamma _{\times k,t}(H_{I}) & \leq & \mid
S_{F}\cap V(G_{I})\mid +\mid S_{F}\cap V(H_{I})\mid \\
& = & \mid S_{F}\mid \\
& = & \gamma _{\times k,t}(F_{I}).
\end{array}
\end{equation*}
Therefore, we assume that $S_{F}\cap \{x_{1}y_{1},y_{1}x_{1}\}\neq
\emptyset $, and in the next two cases we will complete our proof.

\textbf{Case 1.} $\mid S_{F}\cap \{x_{1}y_{1},y_{1}x_{1}\}\mid =1.$

\noindent Let $S_{F}\cap \{x_{1}y_{1},y_{1}x_{1}\}=\{x_{1}y_{1}\}$. Then $%
S_{F}\cap V(H_{I})$ is a kTDS of $H_{I}$ and $\mid S_{F}\cap
X_{1}\mid \geq k$. Since $k\geq 2$ and each clique of every inflated
graph contains at least $k$ vertices of every kTDS and $\mid
S_{F}\cap X_{1}\mid >k$\ follows $\mid S_{F}\cap Y_{1}^{\prime }\mid
=k-1$, then $\mid S_{F}\cap X_{1}\mid =k$. If $deg_{G}(x_{1})=k$,
then $(S_{F}\cap V(G_{I}))\cup \{x_{i}x_{1}\mid x_{1}x_{i}\in
X_{1}\}$\ is a kTDS of $G_{I}$\ with cardinal at most $\mid
S_{F}\cap V(G_{I})\mid +k$ and hence

\begin{equation*}
\begin{array}{lll}
\gamma _{\times k,t}(G_{I})+\gamma _{\times k,t}(H_{I}) & \leq & \mid
S_{F}\cap V(G_{I})\mid +k+\mid S_{F}\cap V(H_{I})\mid \\
& = & \gamma _{\times k,t}(F_{I})+k-1.
\end{array}
\end{equation*}
Otherwise, for every $x_{1}x_{j}\in X_{1}-S_{F}$%
, $(S_{F}\cap V(G_{I}))\cup \{x_{1}x_{j}\}$\ is a kTDS of $G_{I}$
and hence

\begin{equation*}
\begin{array}{lll}
\gamma _{\times k,t}(G_{I})+\gamma _{\times k,t}(H_{I}) & \leq & \mid
(S_{F}\cap V(G_{I}))\cup \{x_{1}x_{j}\}\mid +\mid S_{F}\cap V(H_{I})\mid \\
& = & \gamma _{\times k,t}(F_{I}).
\end{array}
\end{equation*}

\textbf{Case 2.} $\mid S_{F}\cap \{x_{1}y_{1},y_{1}x_{1}\}\mid =2.$

Since $\mid S_{F}\cap X_{1}^{\prime }\mid \geq k$, $\mid S_{F}\cap
Y_{1}^{\prime }\mid \geq k$ and $\{x_{1}y_{1},y_{1}x_{1}\}\subseteq
S_{F}$, then $\mid S_{F}\cap X_{1}\mid =k-1$ or $\mid S_{F}\cap
Y_{1}\mid =k-1 $. Let $\mid S_{F}\cap X_{1}\mid \geq \mid S_{F}\cap
Y_{1}\mid =k-1$. If $deg_{H}(y_{1})=k$, then there exists
$y_{1}y_{j}\in Y_{1}-S_{F}$ such that $(S_{F}\cap V(H_{I}))\cup
\{y_{1}y_{j},y_{j}y_{1}\}$ is a kTDS of $H_{I}$. If
$deg_{H}(y_{1})\geq k+1$, then there are two disjoint vertices
$y_{1}y_{j},y_{1}y_{i}\in Y_{1}-S_{F}$ such that $(S_{F}\cap
V(H_{I}))\cup \{y_{1}y_{j},y_{1}y_{i}\}$ is a kTDS of $H_{I}$.

Now we give a $k$-tuple total dominating set for $G_{I}$ in all
possible cases. If $\mid S_{F}\cap X_{1}^{\prime }\mid \geq k+1$,
then $S_{F}\cap V(G_{I})$ is a kTDS of $G_{I}$. Let $\mid S_{F}\cap
X_{1}\mid =k$ and let $deg_{G}(x_{1})=k$. Then $(S_{F}\cap
V(G_{I}))\cup \{x_{i}x_{1}\mid x_{1}x_{i}\in X_{1}\}$ is a kTDS of
$G_{I}$ with cardinal at most $\mid S_{F}\cap V(G_{I})\mid +k$. If
either $\mid S_{F}\cap X_{1}\mid =k$ and $deg_{G}(x_{1})=k+1$ or
$\mid S_{F}\cap X_{1}\mid =k-1$ and $deg_{G}(x_{1})=k$, then for
each $x_{1}x_{j}\in X_{1}-S_{F}$ the set $(S_{F}\cap V(G_{I}))\cup
\{x_{1}x_{j},x_{j}x_{1}\}$ is a kTDS of $G_{I}$. Finally, if either
$\mid S_{F}\cap X_{1}\mid =k$ and $deg_{G}(x_{1})\geq k+2$ or $\mid
S_{F}\cap X_{1}\mid =k-1$ and $deg_{G}(x_{1})\geq k+1$, then for
every two disjoint vertices $x_{1}x_{j},x_{1}x_{i}\in X_{1}-S_{F}$,
the set $(S_{F}\cap V(G_{I}))\cup \{x_{1}x_{j},x_{1}x_{i}\}$ is a
kTDS of $G_{I}$. Thus in the Case 2 we proved that $\gamma _{\times
k,t}(G_{I})+\gamma _{\times k,t}(H_{I})-k\leq \gamma _{\times
k,t}(F_{I})$.

With comparing the obtained bounds in Cases 1 and 2, we have $\gamma
_{\times k,t}(G_{I})+\gamma _{\times k,t}(H_{I})-k\leq \gamma
_{\times k,t}(F_{I})$, and this completes our proof.
\end{proof}

By closer look at the proof of Theorem \ref{LUbounds,cutedge} we
have the next theorem.

\begin{theorem}
\label{LUbounds,cutedge, strict} Let $F$ be a graph with a cut-edge
$e$ such that $G$ and $H$ are the components of $F-e$. If $2\leq k <
\min \{\delta (G),\delta (H)\}$, then
\begin{equation*}
\gamma _{\times k,t}(G_{I})+\gamma _{\times k,t}(H_{I})-2\leq \gamma
_{\times k,t}(F_{I})\leq \gamma _{\times k,t}(G_{I})+\gamma _{\times
k,t}(H_{I}).
\end{equation*}
\end{theorem}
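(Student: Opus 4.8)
The plan is to revisit the proof of Theorem~\ref{LUbounds,cutedge} and run exactly the same case analysis, but with one extra piece of information supplied by the strict hypothesis. Writing $e=x_1y_1$ as in that proof, the inequality $k<\min\{\delta(G),\delta(H)\}$ forces $deg_G(x_1)\ge\delta(G)\ge k+1$ and $deg_H(y_1)\ge\delta(H)\ge k+1$, hence $|X_1|\ge k+1$ and $|Y_1|\ge k+1$. The upper bound is unchanged, since the union of a $\gamma_{\times k,t}(G_I)$-set with a $\gamma_{\times k,t}(H_I)$-set is a kTDS of $F_I$.

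For the lower bound I would take a $\gamma_{\times k,t}(F_I)$-set $S_F$ and split on $|S_F\cap\{x_1y_1,y_1x_1\}|$, using the neighbourhood identities $N_{F_I}(v)=N_{G_I}(v)\cup\{x_1y_1\}$ for $v\in X_1$, $N_{F_I}(v)=N_{H_I}(v)\cup\{y_1x_1\}$ for $v\in Y_1$, and $N_{F_I}(v)=N_{G_I}(v)$ (resp. $N_{H_I}(v)$) for all remaining vertices of $V(G_I)$ (resp. $V(H_I)$), together with the standing fact that every clique of an inflated graph meets every kTDS in at least $k$ vertices. If $S_F\cap\{x_1y_1,y_1x_1\}=\emptyset$, then $S_F\cap V(G_I)$ and $S_F\cap V(H_I)$ are already kTDS of $G_I$ and $H_I$, so $\gamma_{\times k,t}(F_I)\ge\gamma_{\times k,t}(G_I)+\gamma_{\times k,t}(H_I)$ and we are done for that case.

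If $S_F$ contains exactly one, say $x_1y_1$, then $S_F\cap V(H_I)$ is a kTDS of $H_I$, while in $S_F\cap V(G_I)$ only the vertices of $X_1$ can be deficient, each by at most one, and $|S_F\cap X_1|\ge k$. When $|S_F\cap X_1|\ge k+1$ the set $S_F\cap V(G_I)$ is already a kTDS; when $|S_F\cap X_1|=k$, since $|X_1|\ge k+1$ there is a vertex $x_1x_j\in X_1\setminus S_F$, and adjoining it repairs every vertex of $X_1$. Either way at most one vertex is added, giving $\gamma_{\times k,t}(G_I)+\gamma_{\times k,t}(H_I)\le|S_F\cap V(G_I)|+1+|S_F\cap V(H_I)|=|S_F|=\gamma_{\times k,t}(F_I)$. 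Finally, if $\{x_1y_1,y_1x_1\}\subseteq S_F$, then $|S_F\cap X_1|\ge k-1$ and $|S_F\cap Y_1|\ge k-1$, and only the vertices of $X_1$ (resp. $Y_1$) can be deficient in $S_F\cap V(G_I)$ (resp. $S_F\cap V(H_I)$), each by at most one. Because $|X_1|,|Y_1|\ge k+1$, whenever one of these intersections has size exactly $k-1$ the corresponding clique still has at least two vertices outside $S_F$; adjoining at most two suitably chosen vertices of $X_1\setminus S_F$, and symmetrically at most two of $Y_1\setminus S_F$, turns $S_F\cap V(G_I)$ and $S_F\cap V(H_I)$ into kTDS. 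Since $x_1y_1,y_1x_1\notin V(G_I)\cup V(H_I)$ we have $|S_F\cap V(G_I)|+|S_F\cap V(H_I)|=|S_F|-2$, hence $\gamma_{\times k,t}(G_I)+\gamma_{\times k,t}(H_I)\le(|S_F\cap V(G_I)|+2)+(|S_F\cap V(H_I)|+2)=(|S_F|-2)+4=\gamma_{\times k,t}(F_I)+2$, which is the desired bound. Combining the three cases yields $\gamma_{\times k,t}(F_I)\ge\gamma_{\times k,t}(G_I)+\gamma_{\times k,t}(H_I)-2$.

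I expect the delicate point to be the last case: one must verify that two added vertices really do repair $S_F\cap V(G_I)$ — a first vertex of $X_1\setminus S_F$ fixes every other vertex of $X_1$ but not itself, so a second vertex of $X_1\setminus S_F$ (whose existence, when $|S_F\cap X_1|=k-1$, is exactly where $k<\delta(G)$ enters) is needed to fix it, while none of these additions can make a vertex outside $X_1$ deficient. This is also precisely where the bound improves on Theorem~\ref{LUbounds,cutedge}: there, when $k=\delta(G)=deg_G(x_1)$, the whole clique $X_1$ may be contained in $S_F$, forcing one to adjoin as many as $k$ blue-neighbours to restore domination; the strict hypothesis rules that situation out.
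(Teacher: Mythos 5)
Your proposal is correct and is essentially the paper's intended argument: the paper offers no written proof beyond ``by closer look at the proof of Theorem~\ref{LUbounds,cutedge}'', and your write-up is exactly that closer look, correctly carried out. The key point — that $k<\min\{\delta(G),\delta(H)\}$ forces $|X_1|,|Y_1|\ge k+1$, eliminating the subcase $deg_G(x_1)=k$ where $k$ repair vertices may be needed and leaving at most two repair vertices per side in the worst case ($|S_F\cap X_1|=|S_F\cap Y_1|=k-1$) — is precisely what the paper's case analysis yields under the strict hypothesis.
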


We now calculate the $k$-tuple total domination number of the
inflation of the complete graphs and then continue our discussion.

\begin{proposition}
\label{ K_n} Let $n>k\geq 2$. Then every complete graph $K_{n}$\ is
$\lfloor (n-1)/2\rfloor $-Hamiltonian-like decomposable graph and
\begin{equation*}
\gamma _{\times k,t}((K_{n})_{I})=\left\{
\begin{array}{cc}
nk+1 & \mbox{if }k\mbox{ and }n\mbox{ are both odd } \\
nk & \mbox{otherwise }
\end{array}
.\right.
\end{equation*}
\end{proposition}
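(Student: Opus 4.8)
The plan is to derive everything from the three characterization theorems already proved (Theorems~\ref{gamma=2kn}, \ref{gamma=(2k+1)n} and \ref{gamma=n(2k+1)+1}) together with the classical Hamilton‑cycle decomposition of complete graphs. Recall (Walecki) that for odd $n$ the graph $K_{n}$ is the edge‑disjoint union of $(n-1)/2$ Hamilton cycles, while for even $n$ it is the edge‑disjoint union of $(n-2)/2$ Hamilton cycles together with one perfect matching $M_{0}$. Since $K_{n}$ is itself Hamiltonian, choosing any one of these Hamilton cycles $C$ gives a Hamiltonian‑like decomposition $K_{n}=HLD(K_{n})$ whose unique Hamiltonian cycle is $C$; taking these $\lfloor (n-1)/2\rfloor$ cycles one at a time produces $\lfloor (n-1)/2\rfloor$ Hamiltonian‑like decompositions whose cycles are pairwise edge‑disjoint, so $K_{n}$ is a $\lfloor (n-1)/2\rfloor$‑HLD‑graph. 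Moreover, for any $\kappa\le\lfloor (n-1)/2\rfloor$ we may retain just $\kappa$ of these cycles and discard the rest, so $K_{n}$ is also a $\kappa$‑HLD‑graph.

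Next I would treat the case that $k$ is even, say $k=2\kappa$. Since $k<n$ we have $\kappa\le\lfloor (n-1)/2\rfloor$, so $K_{n}$ is a $\kappa$‑HLD‑graph, and $1\le 2\kappa=k\le n-1=\delta(K_{n})$. Hence Theorem~\ref{gamma=2kn} gives $\gamma_{\times k,t}((K_{n})_{I})=kn$, as required.

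Now suppose $k$ is odd, say $k=2\kappa+1$, and first that $n$ is even. Then $k\le n-1$ forces $\kappa\le (n-2)/2=\lfloor (n-1)/2\rfloor$, so we may pick $\kappa$ of the Walecki Hamilton cycles; the remaining perfect matching $M_{0}$ has cardinality $n/2=\lfloor n/2\rfloor$ and is edge‑disjoint from each chosen cycle, i.e.\ it satisfies condition~(\ref{eqq}) for each of the $\kappa$ one‑cycle decompositions. Thus $K_{n}$ is a $\kappa$‑HLPM‑graph, and since $1\le 2\kappa+1=k\le n-1$, Theorem~\ref{gamma=(2k+1)n} yields $\gamma_{\times k,t}((K_{n})_{I})=kn$.

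Finally suppose $k$ and $n$ are both odd. Then the largest odd integer below $n$ is $n-2$, so $k\le n-2$ and hence $\kappa\le (n-3)/2$, which means at least one Walecki Hamilton cycle is left over after choosing $\kappa$ of them. As $n$ is odd, $K_{n}$ has no perfect matching, so it is not a $\kappa$‑HLPM‑graph; by Theorem~\ref{gamma=(2k+1)n} this rules out $\gamma_{\times k,t}((K_{n})_{I})=kn$, and with Theorem~\ref{LU.bounds} we get $\gamma_{\times k,t}((K_{n})_{I})\ge kn+1$. For the matching upper bound, take the $\kappa$ chosen Hamilton cycles and, inside one of the $\ge 1$ leftover Hamilton cycles, take an alternating set of $(n-1)/2=\lfloor n/2\rfloor$ of its edges; this is a maximum matching $M$ of $K_{n}$, edge‑disjoint from each of the $\kappa$ chosen cycles, so it satisfies~(\ref{eqq}). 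Hence $K_{n}$ is a $\kappa$‑HLMM‑graph and Theorem~\ref{gamma=n(2k+1)+1} gives $\gamma_{\times k,t}((K_{n})_{I})=kn+1$. The one delicate point — and the place where the hypothesis $n>k$ is genuinely used in the sharpest way — is precisely this last case: one must be sure a maximum matching can be chosen edge‑disjoint from the $\kappa$ selected Hamilton cycles, which is why the forced inequality $k\le n-2$ (leaving a spare cycle) matters.
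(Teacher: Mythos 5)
Your proof is correct and follows essentially the same strategy as the paper: exhibit an explicit edge-decomposition of $K_n$ into $\lfloor (n-1)/2\rfloor$ edge-disjoint two-factors (plus a leftover matching) and then invoke Theorems~\ref{gamma=2kn}, \ref{gamma=(2k+1)n} and \ref{gamma=n(2k+1)+1}. The only difference is the choice of witness --- the paper uses the difference classes $E_i=\{(j,j+i)\mid 1\leq j\leq n\}$ and the matching $\{(i,i+\lfloor n/2\rfloor)\}$ where you use Walecki's Hamilton decomposition --- and your explicit remark that $k\leq n-2$ in the odd--odd case leaves a spare cycle from which the maximum matching can be taken edge-disjointly is a point the paper's construction leaves implicit.
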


\begin{proof}
Let $V(G)=\{i\mid 1\leq i\leq n\}$. Since for any $1\leq i\leq
\lfloor (n-1)/2\rfloor $ the edge set $E_{i}=\{(j,j+i)\mid 1\leq
j\leq n\}$\ is a union of some disjoint cycles and $\cup _{1\leq
i\leq \lfloor (n-1)/2\rfloor }E_{i}$\ is a partition of $V(K_{n})$,
then $K_{n}$\ is $\lfloor (n-1)/2\rfloor $-Hamiltonian-like
decomposable graph. Since $M=\{(i,i+\lfloor n/2\rfloor )\mid 1\leq
i\leq \lfloor n/2\rfloor \}$\ is respectively a perfect or maximum
matching of size $\lfloor n/2\rfloor $\ of $K_{n}$, when $n$ is
respectively even or odd, then Theorems \ref{gamma=(2k+1)n} and
\ref{gamma=n(2k+1)+1} complete our proof.
\end{proof}

\begin{proposition}
\label{K_n,K_m,cutedge} Let $2\leq k<n\leq m$ and let $F$ be a graph with a cut-edge $e$ such that $%
G=K_{n}$ and $H=K_{m}$ are the components of $F-e$. Then
\begin{equation*}
\gamma _{\times k,t}(F_{I})=\left\{
\begin{array}{cc}
k(n+m)+1 & \mbox{if }k\mbox{ is odd and }n\equiv m+1\mbox{ }(\mbox{mod }2)
\\
k(n+m) & \mbox{otherwise }
\end{array}
.\right.
\end{equation*}
\end{proposition}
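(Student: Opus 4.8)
The plan is to trap $\gamma _{\times k,t}(F_{I})$ between the general lower bound of Theorem~\ref{LU.bounds} and the cut-edge upper bound of Theorem~\ref{LUbounds,cutedge}, to evaluate the latter by Proposition~\ref{ K_n}, and then, in the two parity cases where these estimates do not already coincide, to sharpen one of them by exhibiting the appropriate Hamiltonian-like structure on $F$ itself. Observe at the outset that $n(F)=n+m$ and that, since $n\le m$, the vertex of $K_{n}$ incident with the cut-edge has degree $n$ while every other vertex of $F$ has degree $n-1$, $m-1$ or $m$; hence $\delta (F)=n-1\ge k$, so Theorems~\ref{LU.bounds}, \ref{gamma=(2k+1)n}, \ref{LUbounds,cutedge} and Proposition~\ref{ K_n} all apply to $F$, $K_{n}$ and $K_{m}$.

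Theorem~\ref{LU.bounds} gives $\gamma _{\times k,t}(F_{I})\ge k(n+m)$, and Theorem~\ref{LUbounds,cutedge} gives $\gamma _{\times k,t}(F_{I})\le \gamma _{\times k,t}((K_{n})_{I})+\gamma _{\times k,t}((K_{m})_{I})$. By Proposition~\ref{ K_n} this sum equals $k(n+m)+\varepsilon$, where $\varepsilon =0$ if $k$ is even, and $\varepsilon$ is the number of odd integers among $n,m$ if $k$ is odd. Thus when $\varepsilon =0$ — that is, $k$ even, or $k$ odd with $n$ and $m$ both even — the two bounds coincide and $\gamma _{\times k,t}(F_{I})=k(n+m)$, exactly the claimed value. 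Only two cases remain, both with $k$ odd: the case $\varepsilon =2$ ($n,m$ both odd), where the upper bound $k(n+m)+2$ has to be reduced to $k(n+m)$; and the case $\varepsilon =1$ ($n,m$ of opposite parity), where the bounds are $k(n+m)$ and $k(n+m)+1$ and the point is to rule out the lower value.

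Write $k=2\kappa +1$. In the case $\varepsilon =2$ I would show that $F$ is a $\kappa$HLPM-graph and then apply Theorem~\ref{gamma=(2k+1)n}. Using the circulant decomposition from the proof of Proposition~\ref{ K_n}, label $V(K_{n})=\{1,\dots ,n\}$ and let $E_{i}=\{(j,(j+i)\bmod n):1\le j\le n\}$; since $k<n\le m$ with $k,n,m$ all odd we have $\kappa \le \tfrac{n-3}{2}<\tfrac{n-1}{2}\le \tfrac{m-1}{2}$, so the layers $E_{1},\dots ,E_{\kappa }$ are available in both $K_{n}$ and $K_{m}$. Gluing the $i$-th layers of $K_{n}$ and $K_{m}$ for $1\le i\le \kappa $ produces $\kappa$ Hamiltonian-like decompositions of $F$ whose Hamiltonian cycles are pairwise edge-disjoint (the cut-edge $e$ lies in none of them), so $F$ is $\kappa$HLD. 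After relabelling so that the endpoints of $e$ are the ``last'' vertices of $K_{n}$ and of $K_{m}$, the long-difference matching $M_{n}=\{(j,j+\tfrac{n-1}{2}):1\le j\le \tfrac{n-1}{2}\}$ is a near-perfect matching of $K_{n}$ missing only that endpoint, all of whose edges lie in the single layer $E_{(n-1)/2}$, which is not among $E_{1},\dots ,E_{\kappa }$; defining $M_{m}$ symmetrically and setting $M=M_{n}\cup M_{m}\cup \{e\}$ yields a perfect matching of $F$ of size $\tfrac{n+m}{2}$ that is edge-disjoint from every glued Hamiltonian cycle, i.e.\ satisfies condition~(\ref{eqq}) for all $\kappa$ decompositions. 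Hence $F$ is $\kappa$HLPM, and Theorem~\ref{gamma=(2k+1)n} yields $\gamma _{\times k,t}(F_{I})=k(n+m)$.

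In the case $\varepsilon =1$ the inequality $\gamma _{\times k,t}(F_{I})\le k(n+m)+1$ is already available; for the matching lower bound, suppose $\gamma _{\times k,t}(F_{I})=k(n+m)$. Then Theorem~\ref{gamma=(2k+1)n} would force $F$ to be a $\kappa$HLPM-graph, hence to admit a perfect matching — impossible, since $n(F)=n+m$ is odd when $n,m$ have opposite parity. Therefore $\gamma _{\times k,t}(F_{I})\ge k(n+m)+1$, and with the upper bound this gives $\gamma _{\times k,t}(F_{I})=k(n+m)+1$. (Alternatively one could build a near-perfect matching on the odd side and a perfect matching on the even side, each avoiding the glued layers, and so exhibit $F$ as a $\kappa$HLMM-graph, then invoke Theorem~\ref{gamma=n(2k+1)+1}.) I expect the one genuinely delicate point to be the bookkeeping in the case $\varepsilon =2$: one must make sure that $\kappa <\lfloor (n-1)/2\rfloor$ really does follow from the hypotheses $k<n$ and $k,n$ odd, so that a ``spare'' long-difference layer is always left over to carry the matching $M$ without clashing with the $\kappa$ Hamiltonian-like decompositions that witness $F$ being $\kappa$HLD.
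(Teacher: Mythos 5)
Your proof is correct, and the easy cases (lower bound from Theorem~\ref{LU.bounds}, upper bound from the additivity of minimum $k$-tuple total dominating sets across the cut-edge, evaluated via Proposition~\ref{ K_n}) coincide with the paper's treatment. Where you genuinely diverge is the hardest case, $k$, $n$, $m$ all odd, in which the naive upper bound $k(n+m)+2$ must be pulled down to $k(n+m)$. The paper does this by direct surgery on dominating sets: it takes the specific $\gamma_{\times k,t}((K_n)_I)$- and $\gamma_{\times k,t}((K_m)_I)$-sets built in the proof of Theorem~\ref{gamma=n(2k+1)+1} (each carrying an extra pair $\{\alpha,\beta\}$ in the clique of the matching-unsaturated vertex), deletes those four surplus vertices, and inserts the two endpoints of the blue edge replacing $e$, checking directly that the result is a kTDS of $F_I$ of size $k(n+m)$. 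You instead work structurally: you glue the circulant layers $E_1,\dots,E_\kappa$ of the two complete graphs into $\kappa$ edge-disjoint Hamiltonian-like decompositions of $F$, route a perfect matching through the cut-edge using the spare long-difference layers (your bookkeeping $\kappa=(k-1)/2\le (n-3)/2<(n-1)/2$ is exactly what is needed, and it mirrors how Proposition~\ref{ K_n} itself must be read for odd $n$), and then invoke the characterization Theorem~\ref{gamma=(2k+1)n}. Your route buys a cleaner conceptual statement ($F$ is itself a $\kappa$HLPM-graph, which is of independent interest) and avoids re-examining the internal structure of the dominating sets from an earlier proof; the paper's surgery is more local and generalizes more readily to situations where the components are not vertex-transitive and no convenient circulant decomposition is available. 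Your handling of the opposite-parity case (no perfect matching in a graph of odd order, hence $\gamma_{\times k,t}(F_I)\ne k(n+m)$ by Theorem~\ref{gamma=(2k+1)n}) is just the unpackaged form of the paper's appeal to Theorem~\ref{gamma>=nk+1}, so that part is essentially identical.
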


\begin{proof}
Let $V(G)=\{x_{i}\mid 1\leq i\leq n\}$, $V(H)=\{y_{i}\mid 1\leq
i\leq m\}$ and $e=x_{n}y_{m}$. Since every complete graph $K_{t}$\
is $\lfloor (t-1)/2\rfloor $-Hamiltonian-like decomposable graph and
$n\leq m$, then $F$ is $\lfloor (n-1)/2\rfloor $-Hamiltonian-like
decomposable graph. We now continue our discussion in the next two
cases.

\textbf{Case 1.} $n\equiv m+1\mbox{ }(\mbox{mod }2)$.

\noindent If $k$ is odd, then Theorem \ref{gamma>=nk+1} follows that
$\gamma _{\times k,t}(F_{I})\geq k(n+m)+1$. Without loss of
generality, we may assume that $n$ is odd and $m$ is even. Then
$\gamma _{\times k,t}(G_{I})=kn+1$ and $\gamma _{\times
k,t}(H_{I})=km$, by Proposition \ref{ K_n}. If $S_{G}$ and $S_{H}$\
are respectively $\gamma _{\times k,t}(G_{I})$-set and $\gamma
_{\times k,t}(H_{I})$-set, then $S_{G}\cup S_{H}$\ is a kTDS of
$F_{I}$ with cardinal $k(n+m)+1$ and so $\gamma _{\times
k,t}(F_{I})=k(n+m)+1$. If $k$ is even, then similarly it can be
verified that $\gamma _{\times k,t}(F_{I})=k(n+m)$.

\textbf{Case 2.} $n\equiv m\mbox{ }(\mbox{mod }2)$.

\noindent Then Theorem \ref{LU.bounds} follows that $\gamma _{\times
k,t}(F_{I})\geq k(n+m)$. If either $n\equiv m\equiv 0\mbox{
}(\mbox{mod }2)$ or $n\equiv m\equiv 1\mbox{ }(\mbox{mod }2)$ and
$k$ is even, then $\gamma _{\times k,t}(G_{I})=kn$, and $\gamma
_{\times k,t}(H_{I})=km$, by Proposition \ref{ K_n}. If $S_{G}$ and
$S_{H}$\ are respectively $\gamma _{\times
k,t}(G_{I}) $-set and $\gamma _{\times k,t}(H_{I})$-set, then obviously $%
S_{G}\cup S_{H}$\ is a kTDS of $F_{I}$ with cardinal $k(n+m)$ and so $%
\gamma _{\times k,t}(F_{I})=k(n+m)$.

Let now $n\equiv m\equiv 1\mbox{ }(\mbox{mod }2)$ and let $k$ be
odd. Then $\gamma _{\times k,t}(G_{I})=kn+1$ and $\gamma _{\times
k,t}(H_{I})=km+1$, by Proposition \ref{ K_n}. Let $S_{G}=S_{1}\cup
\{\alpha ,\beta \}$ be the given $\gamma _{\times k,t}(G_{I})$-set
in the second paragraph of the proof of Theorem
\ref{gamma=n(2k+1)+1} such that $S_{1}=V(M_{I})\cup S^{(1)}\cup
S^{(2)}\cup ...\cup S^{(k)}$ and $\alpha ,\beta \in
X_{n}-(S_{1}-V(M_{I}))$. Similarly, let $S_{H}=S_{1}^{\prime }\cup
\{\alpha ^{\prime },\beta ^{\prime }\}$ be the given $\gamma _{\times k,t}(H_{I})$%
-set in the second paragraph of the proof of Theorem \ref{gamma=n(2k+1)+1} such that $%
S_{1}^{\prime }=V(M_{I}^{\prime })\cup S^{\prime (1)}\cup S^{\prime
(2)}\cup ...\cup S^{\prime (k)}$ and $\alpha ^{\prime },\beta
^{\prime }\in Y_{m}-(S_{1}^{\prime }-V(M_{I}^{\prime }))$. Then
obviously $S=((S_{G}\cup S_{H})-\{\alpha ,\beta ,\alpha ^{\prime
},\beta ^{\prime }\})\cup \{x_{n}y_{m},y_{m}x_{n}\}$ is a kTDS of
$F_{I}$ with cardinal $k(n+m)$ and so $\gamma _{\times
k,t}(F_{I})=k(n+m)$.
\end{proof}

Proposition \ref{ K_n} follows that if $G=K_{n}$ and $H=K_{m}$\ are
complete graphs, then
\begin{equation*}
\gamma _{\times k,t}(G_{I})+\gamma _{\times k,t}(H_{I})=\left\{
\begin{array}{ll}
k(n+m) & \mbox{if }k\mbox{ is odd and }m\mbox{ and }n\mbox{ are both
even,}
\\
k(n+m)+1 & \mbox{if }k\mbox{ is odd and }n\equiv m+1\mbox{ (mod }2), \\
k(n+m)+2 & \mbox{if }k\mbox{, }m\mbox{ and }n\mbox{ are odd.}
\end{array}
\right.
\end{equation*}
Thus Proposition \ref{K_n,K_m,cutedge} follows the next result that
states the given bounds in Theorem \ref{LUbounds,cutedge} are sharp.

\begin{corollary}
\label{K_n,K_m,cutegde,2} Let $2\leq k<n\leq m$ and let $F$ be a graph with a cut-edge $e$ such that $%
G=K_{n}$ and $H=K_{m}$ are the components of $F-e$. Then
\begin{equation*}
\gamma _{\times k,t}(F_{I})=\left\{
\begin{array}{cc}
\gamma _{\times k,t}(G_{I})+\gamma _{\times k,t}(H_{I})-2 & \mbox{if }k%
\mbox{, }m\mbox{ and }n\mbox{ are odd,} \\
\gamma _{\times k,t}(G_{I})+\gamma _{\times k,t}(H_{I}) & \mbox{otherwise }.
\end{array}
\right.
\end{equation*}
\end{corollary}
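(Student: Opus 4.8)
The plan is to combine Proposition \ref{K_n,K_m,cutedge} with the case analysis of $\gamma_{\times k,t}(G_I)+\gamma_{\times k,t}(H_I)$ that was just recorded (immediately before the corollary) and simply match the two expressions case by case. First I would recall that by Proposition \ref{ K_n} we have $\gamma_{\times k,t}((K_n)_I)=kn$ unless both $k$ and $n$ are odd, in which case it is $kn+1$; hence the sum $\gamma_{\times k,t}(G_I)+\gamma_{\times k,t}(H_I)$ equals $k(n+m)$ when $k$ is even or when $k$ is odd and both $n,m$ are even, equals $k(n+m)+1$ when $k$ is odd and exactly one of $n,m$ is odd, and equals $k(n+m)+2$ when $k,m,n$ are all odd — exactly the displayed formula preceding the corollary.

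Next I would split into the same cases used in Proposition \ref{K_n,K_m,cutedge}. If $k$, $m$, $n$ are all odd, then Proposition \ref{K_n,K_m,cutedge} gives $\gamma_{\times k,t}(F_I)=k(n+m)$ while the sum is $k(n+m)+2$, so $\gamma_{\times k,t}(F_I)=\gamma_{\times k,t}(G_I)+\gamma_{\times k,t}(H_I)-2$. In every other case I claim $\gamma_{\times k,t}(F_I)$ equals the sum. Indeed, if $k$ is odd and $n\equiv m+1\ (\mathrm{mod}\ 2)$, then Proposition \ref{K_n,K_m,cutedge} gives $\gamma_{\times k,t}(F_I)=k(n+m)+1$, which coincides with the sum in that subcase; and in all remaining subcases (namely $n\equiv m\ (\mathrm{mod}\ 2)$ with $k$ even, or $k$ odd with both $n,m$ even) Proposition \ref{K_n,K_m,cutedge} gives $\gamma_{\times k,t}(F_I)=k(n+m)$, which again equals the sum. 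So the two-way split of the corollary is obtained by simply regrouping the three-way split of Proposition \ref{K_n,K_m,cutedge}.

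There is essentially no obstacle here: the corollary is a bookkeeping restatement of Proposition \ref{K_n,K_m,cutedge} obtained by substituting the closed form for $\gamma_{\times k,t}((K_n)_I)$. The only point needing a line of care is that the three parity patterns of $(k,n,m)$ listed in the sum formula are mutually exclusive and exhaustive (given $n\le m$, the case ``$k$ odd, exactly one of $n,m$ odd'' really is the case $n\equiv m+1\ (\mathrm{mod}\ 2)$), so that each case of Proposition \ref{K_n,K_m,cutedge} lands in a unique case of the sum formula. After that, the difference $\gamma_{\times k,t}(G_I)+\gamma_{\times k,t}(H_I)-\gamma_{\times k,t}(F_I)$ is $2$ precisely when $k,m,n$ are all odd and $0$ otherwise, which is the assertion.

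I would also remark in closing that this shows the lower bound $\gamma_{\times k,t}(G_I)+\gamma_{\times k,t}(H_I)-k\le\gamma_{\times k,t}(F_I)$ of Theorem \ref{LUbounds,cutedge} is attained (when $k$ is odd and $m,n$ are odd the drop is $2$, matching the $k=2$ instance; for larger odd $k$ the sharper Theorem \ref{LUbounds,cutedge, strict} applies and the drop of $2$ is exactly its lower endpoint), and the upper bound $\gamma_{\times k,t}(F_I)\le\gamma_{\times k,t}(G_I)+\gamma_{\times k,t}(H_I)$ is attained in every other case, which is the point of stating the corollary.
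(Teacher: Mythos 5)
Your proof is correct and follows exactly the paper's route: the paper likewise obtains this corollary by substituting the closed form of Proposition \ref{ K_n} into the sum $\gamma _{\times k,t}(G_{I})+\gamma _{\times k,t}(H_{I})$ and comparing case by case with Proposition \ref{K_n,K_m,cutedge}. (Your closing aside about attaining the lower bound of Theorem \ref{LUbounds,cutedge} is not quite right --- a drop of $2$ occurs only for odd $k$, hence $k\ge 3$, so it does not meet the bound $-k$, and Theorem \ref{LUbounds,cutedge, strict} needs $k<n-1$ rather than $k<n$ --- but this remark is tangential and does not affect the proof of the corollary itself.)
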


Now in the next theorem we give upper and lower bounds for the
$k$-tuple total domination number of the inflation of a graph $F,$
which contains a cut-vertex $v$, in terms on the $k$-tuple total
domination numbers of the inflation of the $v$-components of $F-v$.

\begin{theorem}
\label{LUbound.cutver} Let $F$ be a graph with a cut-vertex $v$ such
that $G^{1}$, $G^{2}$, ..., $G^{m}$ are all $v$-components of $F-v$.
If $2\leq k<\min \{\delta (G^{i})\mid 1\leq i\leq m\}$, then
\begin{equation*}
\Sigma _{1\leq i\leq m}\gamma _{\times k,t}(G_{I}^{i})-m(k+1)+k\leq \gamma
_{\times k,t}(F_{I})\leq \Sigma _{1\leq i\leq m}\gamma _{\times
k,t}(G_{I}^{i}),
\end{equation*}
and \label{t:13}the upper bound $\Sigma _{1\leq i\leq m}\gamma
_{\times k,t}(G_{I}^{i})$ is sharp.
\end{theorem}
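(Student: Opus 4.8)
The plan is to mimic the structure of the proof of Theorem \ref{LUbounds,cutedge}, reducing the cut-vertex situation to repeated local adjustments at the single clique $X_v$ of $F_I$ that corresponds to the cut-vertex $v$. First I would fix notation: write $V(F)=\{v\}\cup\bigcup_{i=1}^m V(G^i)$, and note that in $F_I$ the vertex $v$ of $F$ has degree $\deg_F(v)=\sum_{i=1}^m \deg_{G^i}(v_i)$ where $v_i$ denotes the copy of $v$ in the $i$-th $v$-component, so the red clique $X_v$ in $F_I$ splits naturally into blocks, one block $B_i$ of size $\deg_{G^i}(v_i)$ associated to each $G^i$; the blue edges incident with $B_i$ all go into $(G^i)_I$. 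For the upper bound, take $\gamma_{\times k,t}((G^i)_I)$-sets $S_i$ for each $i$; their union is a $k$TDS of $F_I$ (every vertex of $F_I$ lies inside some $(G^i)_I$ and keeps all its neighbours there), giving $\gamma_{\times k,t}(F_I)\le \sum_i \gamma_{\times k,t}((G^i)_I)$. Sharpness of this upper bound: I would exhibit, say, $F$ obtained by identifying one vertex of each of $m$ copies of $K_{n}$ (with $n$ even, $k$ even so there is no $+1$ correction), and check via Proposition \ref{ K_n} together with the additivity one reads off from the union construction that equality holds — essentially the same computation as in Corollary \ref{K_n,K_m,cutegde,2}, iterated.

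For the lower bound I would let $S_F$ be a $\gamma_{\times k,t}(F_I)$-set and split $S_F = (S_F\cap X_v)\sqcup \bigsqcup_i \big(S_F\cap (V((G^i)_I)\setminus X_v)\big)$ after noting $X_v\subseteq V(F_I)$ is shared and $V(F_I)\setminus X_v = \bigsqcup_i \big(V((G^i)_I)\setminus X_v\big)$. Let $S_F^i := \big(S_F\cap V((G^i)_I)\big)$, which includes the vertices of $B_i$ chosen by $S_F$. Since every inflated graph has each red clique meeting every $k$TDS in at least $k$ vertices, $|S_F\cap X_v|\ge k$, and likewise $|S_F^i\cap X_v|=|S_F\cap B_i|$ could be as small as $0$. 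The key local claim, exactly parallel to Cases 1 and 2 in the proof of Theorem \ref{LUbounds,cutedge}, is: for each $i$, by adjoining at most $k$ extra vertices of $X_v$ (specifically, at most $\max\{0,\,k-|S_F\cap B_i|\}$ vertices of $B_i$, plus possibly a bounded number of blue-matched partners to repair the few vertices of $(G^i)_I$ that were dominated through the cut), one can extend $S_F^i$ to a genuine $k$TDS $T_i$ of $(G^i)_I$. Summing, $\sum_i \gamma_{\times k,t}((G^i)_I)\le \sum_i |T_i| \le |S_F| + (\text{total number of repair vertices added})$. I would then bound the total number of added vertices by $m(k+1)-k$: the vertices of $X_v$ get double-counted across the $m$ extensions, so the overhead from re-completing each $B_i$ to size $k$ is at most $(m-1)k$ in excess of what $S_F\cap X_v$ already pays (using $|S_F\cap X_v|\ge k$), and the blue-matching repairs contribute at most one extra vertex per component beyond that, i.e. a further $\le m$; assembling the arithmetic carefully yields the stated $-m(k+1)+k$ term.

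The main obstacle, as in Theorem \ref{LUbounds,cutedge}, is the bookkeeping in the lower-bound case analysis: one must check that after removing the "global'' clique $X_v$ and keeping only the part of $S_F$ inside $(G^i)_I$, the only vertices of $(G^i)_I$ that can fail to be $k$-tuple totally dominated are (i) vertices of $B_i$ themselves whose neighbours partly lay in other blocks $B_j$ — impossible, since distinct blocks are not red-adjacent, so actually a vertex of $B_i$ sees within $(G^i)_I$ all of $B_i$ minus itself plus its blue partner, and (ii) the unique blue partner $v_i x_j$ of each vertex $v_i x_j\in B_i$, which in $F_I$ might have been relying on neighbours of $v_i x_j$ that sit in $X_v\setminus B_i$ — but $v_i x_j$'s only neighbour in $X_v$ is $v_i x_j$ itself (a single vertex), so losing at most one dominator there. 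Thus each component needs only the "top up $B_i$ to $k$'' fix plus at most one blue repair, which is precisely what makes the constant $m(k+1)-k$ come out, and I would present this neighbourhood analysis explicitly before doing the count. Finally I would remark, as the theorem asserts, that the upper bound is attained by the iterated-$K_n$ example and leave the question of sharpness of the lower bound (which, by analogy with Corollary \ref{K_n,K_m,cutegde,2}, should also be attainable) as understood.
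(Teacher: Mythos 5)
Your overall architecture (union of component optima for the upper bound; restrict a minimum kTDS of $F_I$ to each $(G^i)_I$ and repair locally at the clique of $v$ for the lower bound) is the same as the paper's, and your arithmetic even lands on the right constant $m(k+1)-k$. But the local repair step contains a genuine error. In $F_I$ the cut-vertex $v$ is replaced by a \emph{single} red clique $Y_v$ on $\deg_F(v)=\sum_i\deg_{G^i}(v)$ vertices, so the blocks $B_i$ and $B_j$ \emph{are} mutually red-adjacent; your assertion that ``distinct blocks are not red-adjacent'' contradicts your own setup and reverses the failure analysis. The vertices that lose dominators when you pass from $S_F$ to $S_F^i=S_F\cap V((G^i)_I)$ are precisely the vertices of $B_i$ (each loses all its neighbours in $Y_v\setminus B_i$, hence possibly all $k$ of its dominators), whereas the blue partners in your item (ii) lose nothing: their unique neighbour outside their own red clique already lies in $B_i\subseteq V((G^i)_I)$. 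So your repair budget is spent on a non-problem while the real one is dismissed as ``impossible.'' Moreover, if you only top $B_i$ up to $k$ vertices, every vertex of $T_i\cap B_i$ sees just $k-1$ red neighbours and needs its blue partner, so up to $k$ blue repairs per component may be required, not one, and your count breaks for $k\ge 2$. The correct fix, which is what the paper does, is to use $|B_i|=\deg_{G^i}(v)\ge\delta(G^i)\ge k+1$ and top $B_i$ up to $k+1$ vertices (at most $k+1-t_i$ additions, where $t_i=|S_F\cap B_i|$ and $\sum_i t_i\ge k$); then every vertex of $B_i$ has $k$ red dominators inside $B_i$, no blue repair is needed, and the total overhead is at most $m(k+1)-\sum_i t_i\le m(k+1)-k$.

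A secondary gap concerns sharpness of the upper bound: you invoke ``the additivity one reads off from the union construction,'' but that construction only gives $\gamma_{\times k,t}(F_I)\le\sum_i\gamma_{\times k,t}((G^i)_I)$. For your glued-$K_n$ example you still must exclude a smaller kTDS of $F_I$, and the trivial per-clique count only yields $n(F)k=\sum_i n(G^i)k-(m-1)k$. The paper closes this with Observation \ref{obser}: every red clique other than $Y_v$ must carry at least $k$ vertices of any kTDS, and shrinking any block $B_i$ below $k$ leaves a vertex of $(G^i)_I$ underdominated that no other component can rescue. You need that argument, or an equivalent direct computation, to conclude equality.
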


\begin{proof}
Let $V(G^{i})=\{x_{j}^{i}\mid 1\leq j\leq n(G^{i})\}$. Without loss of
generality, we may suppose that $x_{1}^{1}=...=x_{1}^{m}=v$. Then $%
V(F_{I})=\cup _{1\leq i\leq m}V(G_{I}^{i})$ and

\begin{equation*}
E(F_{I})=(\cup _{1\leq i\leq m}E(G_{I}^{i}))\cup
\{(x_{1}^{i}x_{l}^{i},x_{1}^{j}x_{t}^{j})\mid x_{1}^{i}x_{l}^{i}\in X^{i},%
\mbox{ and }x_{1}^{j}x_{t}^{j}\in X^{j}\mbox{, for }1\leq i<j\leq
m\}.\noindent
\end{equation*}
Let $S^{i}$ be a $\gamma _{\times k,t}(G_{I}^{i})$-set. Since $\cup
_{1\leq i\leq m}S^{i}$ is a kTDS of $F_{I}$ with cardinal $\Sigma
_{1\leq i\leq m}\gamma _{\times k,t}(G_{I}^{i})$, then $\gamma
_{\times k,t}(F_{I})\leq \Sigma _{1\leq i\leq m}\gamma _{\times
k,t}(G_{I}^{i})$.

Let now $S$ be a $\gamma _{\times k,t}(F_{I})$-set. Let $S^{i}=S\cap
V(G_{I}^{i}) $, where $1\leq i\leq m$. Then each $S^{i}$\ is a kTDS
of $G_{I}^{i}-X_{1}^{i}$. Let $\mid S\cap X_{1}^{i}\mid =t_{i}$,
where $1\leq i\leq m$. Then $\Sigma _{1\leq i\leq m}t_{i}\geq k$.
Since $k<\delta (G^{i})$, then by adding at most $k+1-t_{i}$ vertices of $%
X_{1}^{i}$ to $S$, we may obtain a kTDS $S^{\prime }$ of $F_{I}$
such that every $S^{\prime }\cap V(G_{I}^{i})$ is a kTDS of
$G_{I}^{i}$. Then

\begin{equation*}
\begin{array}{lll}
\Sigma _{1\leq i\leq m}\gamma _{\times k,t}(G_{I}^{i}) & \leq & \Sigma
_{1\leq i\leq m}\mid S_{F}^{\prime }\cap V(G_{I}^{i})\mid \\
& \leq & \mid S_{F}\mid +m(k+1)-\Sigma _{1\leq i\leq m}t_{i} \\
& \leq & \gamma _{\times k,t}(F_{I})+m(k+1)-k
\end{array}
\end{equation*}
and hence
\begin{equation*}
\Sigma _{1\leq i\leq m}\gamma _{\times k,t}(G_{I}^{i})-m(k+1)+k\leq \gamma
_{\times k,t}(F_{I})\leq \Sigma _{1\leq i\leq m}\gamma _{\times
k,t}(G_{I}^{i}).
\end{equation*}

Now we show that the upper bound $\Sigma _{1\leq i\leq m}\gamma
_{\times k,t}(G_{I}^{i})$ is sharp. Let $F$ be a graph with a
cut-vertex $v$ such that $G^{1}$, $G^{2}$, ..., $G^{m}$ are all
$v$-components of $F-v$ and $\gamma _{\times
k,t}(G_{I}^{i})=n(G^{i})k$.
Consider $V(G^{i})=\{x_{j}^{i}%
\mid 1\leq j\leq n(G^{i})\}$ and $x_{1}^{1}=...=x_{1}^{m}=v$. Let $%
Y_{v}^{F} $ be the respective red clique with vertex $v$ in $F$. Let $%
S^{i}$ be a $\gamma _{\times k,t}(G_{I}^{i})$-set, where $1\leq i\leq m$%
. Then every clique in $G_{I}^{i}$ contains exactly $k$ vertices of
$S^{i}$. Then $S=\cup _{1\leq i\leq m}S^{i}$ is a kTDS of $F_{I}$
with cardinal $\Sigma _{1\leq i\leq m}\gamma _{\times
k,t}(G_{I}^{i})=\Sigma _{1\leq i\leq m}n(G^{i})k$ such that
$Y_{v}^{F}$ contains $mk$ vertices of $S $.

We claim that $S$ has minimum cardinal among of all $k$-tuple total
dominating sets of $F_I$. Observation \ref{obser} follows that every
red clique other than $Y_{v}^{F}$ must contain at least $k$ vertices
of every kTDS of $F_{I}$. Thus we can not reduce the number of
vertices of $S$ in cliques except probably $Y_{v}^{F}$. Since also
reducing the number of the vertices of $S\cap Y_{v}^{F}$ reduce the
cardinal of $k$-tuple total domination number of $G_{I}^{i}$, then
we can not reduce it, by Observation \ref{obser}. Therefore $S$ is a
minimal kTDS of $F_{I}$. Now let $S^{\prime }$ be an arbitrary
$\gamma _{\times k,t}(F_{I})$-set with cardinal less than $\Sigma
_{1\leq i\leq m}n(G^{i})k$. Then, by the previous discussion, there
exists a $v$-component $G^{i}$ of $F-v $ and a clique $X$ of it
other than $X_{1}^{i}=Y_{v}^{F}\cap V(G_{I}^{i})$ such that $\mid
S^{\prime }\cap X\mid <k$. But this is not possible, by Observation
\ref{obser}. Therefore $S$ is a $\gamma _{\times k,t}(F_{I})$-set
and so $\gamma _{\times k,t}(F_{I})=\Sigma _{1\leq i\leq m}\gamma
_{\times k,t}(G_{I}^{i})=\Sigma _{1\leq i\leq m}n(G^{i})k.$
\end{proof}

Let $G^{1}$, $G^{2}$, ..., $G^{m}$ and $F$ be the given graphs in
the second part of the proof of Theorem \ref{LUbound.cutver}. Then
we see that $n(F)=\Sigma _{1\leq i\leq m}n(G^{i})-m+1$ and
\begin{equation*}
\begin{array}{lll}
\gamma _{\times k,t}(F_{I}) & = & \Sigma _{1\leq i\leq m}n(G^{i})k \\
& = & n(F)k+(m-1)k \\
& \leq & n(F)(k+1)-1.
\end{array}
\end{equation*}
Thus this family of graphs are examples of the graphs $G$ of order
$n$, which $\gamma _{\times k,t}(G_{I})=nk+\alpha k\leq n(k+1)-1$,
where $\alpha $ is an arbitrary positive integer.

%%%%%%%%%%%%%%%%%%%%%%%%%%%%%%%%%%%%%%%%%%%%%%%%%%%%%%%%%%%%%%%%%%%%%%%%%%%%%%%%%%%%%%%%%%%%55

\section{$k$-tuple total domination number in the inflation of some graphs}

In section 3, we calculated the $k$-tuple total domination number of
the inflation of the complete graphs. Now we find this number in the
inflation of the generalized Petersen graphs, Harary graphs and
complete bipartite graphs. Also we give an upper bound for this
number when our graph is a complete multipartite graph.

In \cite{Wat69}, Watkins introduced the notion of generalized
Petersen graph (GPG for short) as follows: for any integer $n\geq 3$
let $Z_{n}$\ be additive group on $\{1,2,...,n\}$ and $m\in
Z_{n}-\{0\}$, the graph $P(n,m)$ is defined on the set
$\{a_{i},b_{i}\mid i\in Z_{n}\}$ of $2n$ vertices with edges
$a_{i}a_{i+1}$, $a_{i}b_{i}$, $b_{i}b_{i+m}$ for all $i$. If
$m=n/2$, then every vertex $b_{i}$\ has degree $2$ and every
vertex $a_{i}$\ has degree $3$, and otherwise $P(n,m)$\ is 3-regular. Thus $%
\gamma _{\times 3,t}((P(n,m)_{I})=n(G_{I})=6n$, where $m\neq n/2$.
Since $M=\{a_{i}b_{i}\mid i\in Z_{n}\}$ is a perfect matching in
$P(n,m)$, then $S=$\ $\{a_{i}b_{i},b_{i}a_{i}\mid i\in Z_{n}\}$\ is
a $\gamma _{t}((P(n,m))_{I})$-set and so $\gamma
_{t}((P(n,m)_{I})=2n$. In the next proposition we calculate $\gamma
_{\times 2,t}((P(n,m)_{I})$.

\begin{proposition}
\label{2,P(n,m)} Let $n\geq 3$ and $m\geq 1$ be integers. Then
\begin{equation*}
\gamma _{\times 2,t}((P(n,m))_{I})=\left\{
\begin{array}{cc}
4n+2 & \mbox{if }m=n/2\mbox{ is odd } \\
4n & \mbox{otherwise }
\end{array}
.\right.
\end{equation*}
\end{proposition}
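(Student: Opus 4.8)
The plan is to split into the two cases dictated by the formula and to use the structural results already in hand. When $m \neq n/2$ the graph $P(n,m)$ is $3$-regular, so $\delta = 3 > 2 = k$, and I would apply Corollary~\ref{L.bound,max} to get the lower bound $\gamma_{\times 2,t}((P(n,m))_I) \geq \max\{2n(P),\lceil 4m(P)/\Delta\rceil\}$; since $n(P) = 2n$ and $\Delta = 3$, this gives $\gamma_{\times 2,t}((P(n,m))_I)\geq 4n$. For the matching upper bound I would exhibit an explicit DTDS. The natural candidate: take, for each $i$, two vertices from each red clique $X_{a_i}$ (a triangle, since $\deg a_i = 3$) and the single-vertex clique or $2$-clique at $b_i$ depending on whether $m=n/2$; more concretely, use the two disjoint $2$-factors of $P(n,m)$ coming from the outer cycle $a_1a_2\cdots a_n$ together with an $a$–$b$ matching, plus the inner cycles $b_ib_{i+m}\cdots$, to build a Hamiltonian-like decomposition when one exists. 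The cleanest route, given Theorem~\ref{gamma=2kn}, is to check whether $P(n,m)$ is a $1$HLD-graph, i.e.\ a Hamiltonian-like decomposable graph, whenever $m\neq n/2$; I expect the union of the outer $n$-cycle $a_1\cdots a_n a_1$ and the cycle(s) induced on $\{b_i\mid i\in Z_n\}$ by the edges $b_ib_{i+m}$ (which together with the spokes cover all $2n$ vertices) to furnish such a decomposition, giving $\gamma_{\times 2,t}((P(n,m))_I)=4n$ directly from Theorem~\ref{gamma=2kn}. Care is needed because when $\gcd(n,m)=d>1$ the inner vertices split into $d$ disjoint cycles, but each such cycle is still Hamiltonian on its own vertex set, so the decomposition $P(n,m) = HLD(\text{outer }C_n, \text{inner cycles})$ still works.

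When $m = n/2$ (so $n$ is even), the vertices $b_i$ have degree $2$, so $\delta = 2 = k$ and the red clique $X_{b_i}$ is a single edge $K_2$, forcing \emph{both} its vertices into every DTDS; the red clique $X_{a_i}$ is a triangle, contributing at least $2$. Thus $\gamma_{\times 2,t}((P(n,m))_I)\geq 2n + 2\cdot(n/2)\cdot\!$ — more carefully, $\sum_i |S\cap X_{a_i}| \geq 2n$ and $\sum_i |S\cap X_{b_i}| = 2n$, but the $b_i$-cliques overlap in count with... no: the red cliques are vertex-disjoint, so $|S|\geq 2n + 2n$? That over-counts; rather $n(P) = 3n$ when $m\neq n/2$ but $n(P_I) = 2m(P)$, and here $m(P) = 2n + n = 3n$ so $n(P_I) = 6n$ split into $n$ triangles and $n$ edges, giving $|S|\geq 2n + 2n = 4n$ from the clique bound alone. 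The extra $+2$ in the odd case must come from parity: by Corollary~\ref{LU.bounds,delta} with $\ell = n$ (all $b_i$ have degree $\delta=2$... but the $a_i$ have degree $3$), so actually $\ell$ is the number of degree-$2$ vertices; I would instead invoke Theorem~\ref{gamma=(2k+1)n}/\ref{gamma=n(2k+1)+1}-style reasoning adapted to the mixed-degree situation, or argue directly that the inner cycle $b_0 b_{n/2}$ has odd length $n/2$ when $n/2$ is odd, obstructing the Hamiltonian-like-decomposition-plus-matching structure needed to hit $4n$ exactly.

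The \textbf{main obstacle} I anticipate is the sharp lower bound $\gamma_{\times 2,t}\geq 4n+2$ in the case $m=n/2$ with $n/2$ odd. Here I would suppose for contradiction that a DTDS $S$ of size $4n$ exists; then every red triangle $X_{a_i}$ meets $S$ in exactly $2$ vertices and every $K_2$ at $b_i$ lies wholly in $S$. I would then analyze how the two $S$-vertices in $X_{a_i}$ must double-dominate the third vertex $a_ia_{i-1}$, $a_ia_{i+1}$, or $a_ib_i$: each vertex of $X_{a_i}$ needs two neighbors in $S$, and its only non-red neighbor is a single blue partner, so both red neighbors inside the triangle must be in $S$ — but the triangle has only $2$ of its $3$ vertices in $S$, so the excluded vertex $w$ of $X_{a_i}$ has only one red neighbor in $S$ and therefore needs its blue partner in $S$. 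Tracking these forced blue edges around the outer $n$-cycle and the inner cycle of length $n/2$ should produce a parity contradiction precisely when $n/2$ is odd, forcing $|S|\geq 4n+2$; the matching upper bound $4n+2$ is then the easy direction, adding two vertices of some triangle $X_{a_i}$'s blue neighborhood to repair the one failing vertex. The rest of the argument — the $m\neq n/2$ case and the $n/2$ even case — should follow routinely from Theorems~\ref{gamma=2kn} and~\ref{gamma=(2k+1)n} once the appropriate Hamiltonian-like decompositions (and, in the $n/2$-even case, a suitable perfect matching disjoint from the decomposition cycles) are written down explicitly.
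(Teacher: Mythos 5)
Your handling of the case $m\neq n/2$ is exactly the paper's argument (the outer $n$-cycle plus the $\gcd(n,m)$ inner cycles form a Hamiltonian-like decomposition, then Theorem~\ref{gamma=2kn}), and it is fine. The gap is in the case $m=n/2$ with $m$ odd, where your plan rests on two errors. A small one: for $m=n/2$ the edges $b_ib_{i+m}$ form a perfect matching on $\{b_i\}$, not an ``inner cycle of odd length $n/2$,'' so there is no odd cycle to obstruct anything. The decisive one is that your forcing step is backwards. In any DTDS $S$ the two-vertex cliques $X_{b_i}$ force $b_ia_i,\,b_ib_{i+m},\,a_ib_i\in S$ for all $i$; if $|S|=4n$, then $S\cap X_{a_i}=\{a_ib_i,\,a_ia_{i+\epsilon_i}\}$ with $\epsilon_i\in\{-1,+1\}$. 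The excluded vertex $a_ia_{i-\epsilon_i}$ of the triangle $X_{a_i}$ then has \emph{both} of its red neighbours in $S$ (they are precisely the two chosen vertices), so it forces nothing; the only surviving conditions come from the chosen outer vertices $a_ia_{i+\epsilon_i}$, and they say exactly that the signs $\epsilon_i$ alternate strictly around the outer cycle. Since $n=2m$ is even, this is satisfiable regardless of the parity of $m$, so the parity contradiction you hope for does not exist.

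In fact no correct argument can give $4n+2$ here, because that value contradicts the paper's own Theorem~\ref{gamma=2kn}: the edge set $\{a_ia_{i+1}\mid i\ \mbox{odd}\}\cup\{a_ib_i\mid i\in Z_n\}\cup\{b_ib_{i+m}\mid i\in Z_n\}$ is a $2$-factor of $P(2m,m)$ (for $P(6,3)$ it is a single Hamiltonian cycle $a_1a_2b_2b_5a_5a_6b_6b_3a_3a_4b_4b_1$), so $P(2m,m)$ is Hamiltonian-like decomposable and Theorem~\ref{gamma=2kn} gives $\gamma_{\times 2,t}((P(2m,m))_I)=4n$ for every $m\geq 2$; equivalently, the set $S$ described above with alternating $\epsilon_i$ is a DTDS of cardinality $4n$. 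The paper's own proof of the $4n+2$ case only exhibits a set of that size and asserts without argument that it is minimum, so the discrepancy lies in the statement itself, not merely in your proof: the case distinction on the parity of $n/2$ should disappear and the answer should be $4n$ throughout. (Also, your $m=n/2$ even subcase does not need a perfect matching: $k=2$ is even, so Theorem~\ref{gamma=2kn} alone applies.)
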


\begin{proof}
Let $G=P(n,m)$. We first assume that $m\neq n/2$ and $d$ is the
greatest common divisor of $m$ and $n$. Then the induced subgraph by $\{b_{i}\mid i\in Z_{n}\}$ of $%
G $ has a partition to $d$ disjoint cycle or cycles $%
C_{i}:b_{i}b_{i+m}b_{i+2m}...b_{i+\alpha -m}$, where $1\leq i\leq d$\ and $%
\alpha =\min \{tm\mid tm\equiv 0\mbox{ mod }n\}$. Since the induced
subgraph by $\{a_{i}\mid i\in Z_{n}\}$ of $G$ is cycle $%
C_{a}:a_{1}a_{2}a_{3}...a_{n}$, then $G$\ is a Hamiltonian-like
decomposable graph and Theorem \ref {gamma=2kn} follows $\gamma
_{\times 2,t}(G_{I})=4n$.

Let now $m=n/2$. Then $b_{i}b_{j}\in E(G)$ if and only if $j\equiv
i+m\mbox{ (mod }n)$. Then every vertex $b_{i}$\ has degree $2$ and
every vertex $a_{i}$ has degree $3$. Then there exist $\lfloor m/2
\rfloor $ disjoint cycles
$b_{i}a_{i}a_{i+1}b_{i+1}b_{i+1+m}a_{i+1+m}a_{i+m}b_{i+m}$ of length
$8$. If $m$ is even, then these cycles are a partition of $V(G)$.
Hence $G$ is a Hamiltonian-like decomposable graph and Theorem \ref
{gamma=2kn} follows $\gamma _{\times 2,t}(G_{I})=4n$. Otherwise
these cycles are a partition of $V(G)-\{a_{m},b_{m},b_{n},a_{n}\}$.
We notice that the
induced subgraph of $G$ by $\{a_{m},b_{m},b_{n},a_{n}\}$ is the path $%
P_{4}:a_{m}b_{m}b_{n}a_{n}$. Set
\begin{equation*}
\begin{array}{lll}
S & = & S_{1}\cup S_{2}\cup ...\cup S_{m^{\prime }} \\
& \cup &
\{a_{m}a_{m+1},a_{m}a_{m-1},a_{m}a_{m};b_{m}a_{m},b_{m}b_{n};b_{n}b_{m},b_{n}a_{n};a_{n}b_{n},a_{n}a_{1},a_{n}a_{n-1}\},
\end{array}
\end{equation*}
where $1\leq i\leq m^{\prime }$ and
\begin{equation*}
\begin{array}{lll}
S_{i} & = &
\{b_{i}b_{i+m},b_{i}a_{i};a_{i}b_{i},a_{i}a_{i+1};a_{i+1}a_{i},a_{i+1}b_{i+1};b_{i+1}a_{i+1},\}
\\
& \cup &
\{b_{i+1}b_{i+1+m};b_{i+1+m}b_{i+1},b_{i+m+1}a_{i+m+1};a_{i+m+1}b_{i+m+1}\}
\\
& \cup &
\{a_{i+m+1}a_{i+m};a_{i+m}a_{i+m+1},a_{i+m}b_{i+m};b_{i+m}a_{i+m},b_{i+m}b_{i}\}.
\end{array}
\end{equation*}
One can verify that $S$ is a minimum DTDS of $G_{I}$ and so $\gamma
_{\times 2,t}(G_{I})=4n+2$.
\end{proof}

We now consider Harary graphs which make a great family of graphs. Given $%
m<n $, place \ $n$ vertices $1$, $2$, $...$, $n$ around a circle, equally
spaced. If $m$ is even, form $H_{m,n}$ by making each vertex adjacent to the
nearest $m/2$ vertices in each direction around the circle. If $m$ is odd
and $n$ is even, form $H_{m,n}$\ by making each vertex adjacent to the
nearest $(m-1)/2$ vertices in each direction and to the diametrically
opposite vertex. In each case, $H_{m,n}$ is \ $m$-regular. When $m$ and $n$
are both odd, index the vertices by the integers modulo $n$. Construct $%
H_{m,n}$ from $H_{m-1,n}$ by adding the edges $(i,i+(n-1)/2),$ for
$0\leq i\leq (n-1)/2$ (see \cite {West}).

\begin{proposition}
\label{Harary} Let $2\leq k\leq m<n$ be integers. Then the Harary graph $H_{m,n}$\ is $%
\lfloor m/2\rfloor $-Hamiltonian-like decomposable graph and
\begin{equation*}
\gamma _{\times k,t}((H_{m,n})_{I})=\left\{
\begin{array}{cc}
nk+1 & \mbox{if }k\mbox{ and }n\mbox{ are both odd } \\
nk & \mbox{otherwise }
\end{array}
.\right.
\end{equation*}
\end{proposition}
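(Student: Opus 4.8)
The plan is to imitate the proof of Proposition~\ref{ K_n}: realize $H_{m,n}$ through its ``jump'' edge classes, build every Hamiltonian-like decomposition and every matching we need out of these classes, and then quote the characterization Theorems~\ref{gamma=2kn}, \ref{gamma=(2k+1)n} and \ref{gamma=n(2k+1)+1} (note $\delta(H_{m,n})=m\ge k$, so all hypotheses are met). Index $V(H_{m,n})$ by $Z_{n}$ and, for $1\le j<n/2$, let $E_{j}=\{\,\{i,i+j\}\mid i\in Z_{n}\,\}$ be the class of jump-$j$ edges. Since $j\ne n/2$, each $E_{j}$ is a $2$-regular spanning subgraph, hence a vertex-disjoint union of $\gcd(n,j)$ cycles each of length $n/\gcd(n,j)\ge 3$, and distinct classes are edge-disjoint. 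If $m$ is even then $E(H_{m,n})=E_{1}\cup\dots\cup E_{m/2}$; if $m$ is odd then $H_{m,n}$ contains $E_{1}\cup\dots\cup E_{(m-1)/2}$ together with the diametrical edges, which form a perfect matching of jump $n/2$ when $n$ is even and a set of jump-$\lfloor n/2\rfloor$ edges containing a matching of size $\lfloor n/2\rfloor$ when $n$ is odd. In every case, for $1\le j\le\lfloor m/2\rfloor$ the class $E_{j}$ is a Hamiltonian-like decomposition of $H_{m,n}$ (its components are cycles whose vertex sets partition $V$), and these $\lfloor m/2\rfloor$ decompositions have pairwise edge-disjoint Hamiltonian cycles; hence $H_{m,n}$ is $\lfloor m/2\rfloor$-Hamiltonian-like decomposable, and by discarding decompositions it is $r$HLD-graph for every $1\le r\le\lfloor m/2\rfloor$.

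If $k=2k'$ is even, then $k\le m$ forces $k'\le\lfloor m/2\rfloor$, so $H_{m,n}$ is a $k'$HLD-graph and Theorem~\ref{gamma=2kn} yields $\gamma_{\times k,t}((H_{m,n})_{I})=nk$. Now let $k=2k'+1$ be odd; again $k'\le\lfloor m/2\rfloor$, and I must exhibit $k'$ of the classes $E_{j}$ together with a matching $M$ of size $\lfloor n/2\rfloor$ edge-disjoint from all cycles of those classes. When $m$ is odd I take the decompositions $E_{1},\dots,E_{k'}$ and let $M$ be (a size-$\lfloor n/2\rfloor$ submatching of) the diametrical edges: these have jump $\lfloor n/2\rfloor$, and $\lfloor n/2\rfloor\notin\{1,\dots,k'\}$ because $k'\le(m-1)/2<\lfloor n/2\rfloor$ (using $m<n$). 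When $m$ is even, $k\le m-1$ gives $k'+1\le m/2$, so I take the decompositions $E_{2},\dots,E_{k'+1}$ and let $M=\{\{2i-1,2i\}\mid 1\le i\le\lfloor n/2\rfloor\}$ (jump-$1$ edges along the cycle $1,2,\dots,n$), which avoids the jumps $2,\dots,k'+1$. In either case $M$ is a perfect matching when $n$ is even and a matching missing exactly one vertex when $n$ is odd, so $H_{m,n}$ is a $k'$HLPM-graph for $n$ even and a $k'$HLMM-graph for $n$ odd. Theorem~\ref{gamma=(2k+1)n} then gives $\gamma_{\times k,t}((H_{m,n})_{I})=nk$ for $n$ even and Theorem~\ref{gamma=n(2k+1)+1} gives $\gamma_{\times k,t}((H_{m,n})_{I})=nk+1$ for $n$ odd; collecting the cases gives the stated formula.

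The only delicate point is the bookkeeping: deciding which $k'$ jump classes to reserve for the Hamiltonian-like decompositions and which jump to leave free for $M$. When $m$ is odd the diametrical jump $\lfloor n/2\rfloor$ is strictly larger than every admissible decomposition jump, so it remains available even in the extreme case $k=m$; when $m$ is even the strict bound $k'\le m/2-1$ lets us slide the reserved jumps up to $\{2,\dots,k'+1\}$ and keep jump $1$ for $M$. The remaining verifications --- that every $E_{j}$ with $j\le\lfloor m/2\rfloor<n/2$ is a union of genuine cycles (no degenerate length-$2$ ``cycle'' since $j\ne n/2$), that the diametrical edge set for $n$ odd really contains a matching of size exactly $\lfloor n/2\rfloor$, and that condition~(\ref{eqq}) is satisfied because the matching edges and the decomposition cycles lie in disjoint jump classes --- are routine and follow the pattern already established in Proposition~\ref{ K_n}.
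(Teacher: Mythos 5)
Your proposal is correct and follows essentially the same route as the paper: realize $H_{m,n}$ via its jump classes $E_{j}$, use these as the $\lfloor m/2\rfloor$ pairwise edge-disjoint Hamiltonian-like decompositions, take the matching from a jump class not used by the chosen decompositions, and invoke Theorems \ref{gamma=2kn}, \ref{gamma=(2k+1)n} and \ref{gamma=n(2k+1)+1}. You are in fact more complete than the paper, whose proof only produces the required matching when $m$ is odd (from the diametrical edges) and silently skips the case $m$ even with $k$ odd, which you handle by reserving jumps $2,\dots,k'+1$ for the decompositions and keeping the jump-$1$ matching free.
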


\begin{proof}
Since for each $1\leq i\leq m$\ the edge subset $E_{i}=\{(j,j+i)\mid
1\leq j\leq n\}$\ is a union of some disjoint cycles and $\cup
_{1\leq i\leq m}E_{i}$ is a partition of $V(H_{m,n})$, then $H_{m,n}$ is a $m$%
-Hamiltonian-like decomposable graph. Let $m$ be odd. If $n$ is even
or odd, then $M=\{(i,i+\lfloor n/2\rfloor )\mid 1\leq i\leq \lfloor
n/2\rfloor \}$ is respectively a perfect or maximum matching of size
$\lfloor n/2\rfloor $ of $H_{m,n}$. Then Theorems
\ref{gamma=(2k+1)n} and \ref{gamma=n(2k+1)+1} complete our proof.
\end{proof}

In the following two theorems we consider the complete bipartite
graphs $K_{p,q}$. First let $p=q$.

\begin{proposition}
\label{K_p,p} For integers $p\geq k\geq 2$, let $G$ be the complete
bipartite graph $K_{p,p}$. Then\ $G$ is a $(\lfloor p/2\rfloor
-1)$HLPM-graph if $p$ is even, otherwise is a $\lfloor p/2\rfloor
$-Hamiltonian-like decomposable graph and so $\gamma _{\times
k,t}(G_{I})=2pk$.
\end{proposition}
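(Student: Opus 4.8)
The plan is to verify the two claimed structural facts about $K_{p,p}$ and then invoke the characterization theorems already proved. Write the bipartition as $A = \{a_1,\dots,a_p\}$ and $B = \{b_1,\dots,b_p\}$, and think of the indices of $B$ as elements of $\mathbb{Z}_p$. The natural building block is the family of perfect matchings $M_i = \{a_j b_{j+i} \mid 1\le j\le p\}$ between $A$ and $B$; for each $i$, the union $M_i \cup M_{i+s}$ (for a suitable shift) is a disjoint union of cycles covering $V(G)$, i.e.\ a Hamiltonian-like decomposition. Concretely, pairing the matchings $M_0,M_1,\dots,M_{p-1}$ into $\lfloor p/2\rfloor$ groups, each group $\{M_{2t}, M_{2t+1}\}$ (with one leftover matching when $p$ is odd) gives a $2$-regular spanning subgraph whose components are even cycles alternating between $A$ and $B$. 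This exhibits $K_{p,p}$ as a $\lfloor p/2\rfloor$-Hamiltonian-like decomposable graph; here I would want to double-check the edge-disjointness of the cycles across the different groups (they are, since the $M_i$ are pairwise edge-disjoint), which is exactly the condition in the definition of a $k$HLD-graph.

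Next I would handle the matching condition needed for the kHLPM property when $p$ is even. We have used $\lfloor p/2\rfloor = p/2$ of the matchings $M_i$; since $K_{p,p}$ is $p$-regular, we have $p$ matchings $M_0,\dots,M_{p-1}$ available in its $1$-factorization, so after using some $p/2$ of them in each of the $(\lfloor p/2\rfloor -1)$ Hamiltonian-like decompositions we still have at least one unused perfect matching $M$ left over (the total number of $1$-factors is $p$, and $p > p/2$ for $p\ge 2$). That leftover $M$ is a perfect matching of $G$ of size $p = \lfloor n/2 \rfloor / 1$... wait, here $n = n(G) = 2p$ so $\lfloor n/2\rfloor = p$, and $M$ has exactly $p$ edges, as required. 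Since $M$ is edge-disjoint from all the cycles used (all cycles consist of edges from the other $M_i$'s), the condition $M \cap E(C^{(i)}_{\ell_i}) = \emptyset$ of equation \eqref{eqq} holds for every Hamiltonian cycle in every decomposition. This shows $K_{p,p}$ is a $(\lfloor p/2\rfloor - 1)$HLPM-graph. When $p$ is odd we do not claim the matching condition, only the Hamiltonian-like decomposability.

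Finally I would derive the value $\gamma_{\times k,t}(G_I) = 2pk$ by splitting on parity. The key point is that $n = 2p$ is even, so by Theorem~\ref{gamma>=nk+1} we will always land in the regime $\gamma_{\times k,t}(G_I) = nk = 2pk$ once we supply the right structural hypothesis. If $k$ is even, write $k = 2k'$; since $k' \le \lfloor p/2\rfloor$ (because $2k' = k \le p$), $G$ is a $k'$HLD-graph by the first part, and Theorem~\ref{gamma=2kn} gives $\gamma_{\times (2k'),t}(G_I) = 2k'n = 2pk$. If $k$ is odd, write $k = 2k'+1$; then $2k'+1 \le p$, so $k' \le (p-1)/2$, hence $k' \le \lfloor p/2\rfloor - 1$ when $p$ is even and $k' \le \lfloor p/2\rfloor$ always. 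Using the $(\lfloor p/2\rfloor-1)$HLPM structure (for $p$ even) one applies Theorem~\ref{gamma=(2k+1)n} to get $\gamma_{\times(2k'+1),t}(G_I) = (2k'+1)n = 2pk$; for $p$ odd, $n$ is still even, so $k$ and $n$ are not both odd, and one needs to observe directly (as in Proposition~\ref{ K_n}) that the absence of a suitable perfect matching forces us to check the bound differently — but since $n=2p$ is even, Theorem~\ref{gamma>=nk+1} already tells us $\gamma_{\times k,t}(G_I) = nk$ cannot be excluded, and the HLPM-type construction still yields a $k$TDS of size $2pk$, matching the lower bound $nk$ of Theorem~\ref{LU.bounds}. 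The main obstacle I anticipate is the bookkeeping in the odd-$p$, odd-$k$ case: one must confirm that even without a perfect matching satisfying \eqref{eqq}, one can still assemble a $k$TDS of exactly $2pk$ vertices directly from the cycle structure (this is where a concrete construction analogous to the $S_{i,\ell_i}$ sets in the proof of Theorem~\ref{gamma=2kn}, padded appropriately, is needed), and to verify it is indeed a $k$TDS of $G_I$.
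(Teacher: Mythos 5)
Your construction is essentially the paper's: both proofs take the cyclic $1$-factorization $M_0,\dots,M_{p-1}$ of $K_{p,p}$ and pair the factors into edge-disjoint $2$-factors, reserving one factor as the matching $M$ when possible (the paper pairs $M_j$ with $M_{-(j+1)}$ via its explicit vertex sequences, you pair $M_{2t}$ with $M_{2t+1}$; the difference is immaterial). However, your write-up has one genuine gap, in the odd-$p$, odd-$k$ case, and you have flagged it without closing it. The fallback you suggest there --- assembling a $k$TDS of size $2pk$ ``directly from the cycle structure'' without a perfect matching satisfying (\ref{eqq}) --- cannot work: Theorem \ref{gamma=(2k+1)n} is an equivalence, so a $(2k'+1)$TDS of cardinality $(2k'+1)n$ exists \emph{only if} $G$ is a $k'$HLPM-graph. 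Likewise, appealing to Theorem \ref{gamma>=nk+1} to say the value $nk$ ``cannot be excluded'' proves nothing. The only route to the conclusion for odd $k$ is to exhibit the $k'$HLPM structure, for odd $p$ as well as even $p$.

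Fortunately the gap closes with one counting observation you already have all the pieces for: writing $k=2k'+1\leq p$, the $k'$ edge-disjoint $2$-factors consume only $2k'\leq p-1$ of the $p$ pairwise edge-disjoint matchings $M_i$, so at least one matching is left over; it is a perfect matching of size $p=\lfloor n/2\rfloor$ that is edge-disjoint from every Hamiltonian cycle used, hence satisfies (\ref{eqq}). (In particular, for odd $p$ the $\lfloor p/2\rfloor=(p-1)/2$ pairs use exactly $p-1$ matchings and one remains, so $K_{p,p}$ with $p$ odd is in fact a $\lfloor p/2\rfloor$HLPM-graph --- a point the paper's own statement also glosses over.) Separately, your bookkeeping sentence for even $p$ is garbled: each $2$-factor uses exactly $2$ matchings, not ``$p/2$ of them in each,'' so the $\lfloor p/2\rfloor-1$ decompositions use $p-2$ matchings and leave two, one of which is the required $M$; the conclusion you draw is correct but the count as written is not.
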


\begin{proof}
We consider the partition $X\cup Y$ for $V(G)$, where $X=\{x_{i}\mid
1\leq i\leq p\}$ and $Y=\{y_{i}\mid 1\leq i\leq p\}$. For $0\leq
j\leq \lfloor p/2\rfloor -1 $, we choose $\lfloor p/2\rfloor $\
sequences on $X\cup Y$\ of length $2p$ that are alternatively from
$X$ and $Y$ with
starting of vertex $x_{1}$ such that every three consequence numbers of them are $x_{i}$, $y_{i+j}$, and $%
x_{i+(2j+1)}$. Let $0\leq j\leq \lfloor p/2\rfloor -2$.\ If $p$ does
not divided by $2j+1$, then $j$-th sequence makes the cycle

\begin{equation*}
C_{j}:x_{1}y_{j+1}x_{2j+2}y_{3j+2}...x_{p-2j}y_{p-j}
\end{equation*}
but if $p=(2j+1)t$, for some positive integer $t$, then it makes
$2j+1$ disjoint cycles
\begin{equation*}
C_{i}^{j}:x_{i}y_{i+j}x_{i+(2j+1)}y_{i+(3j+1)}...x_{i+(t-1)(2j+1)}y_{i+(t-1)(2j+1)+j}
\end{equation*}
of length $t$, where $1\leq i\leq 2j+1$. We notice that for odd $p$
and $j=\lfloor p/2\rfloor -1$ there exists another cycle of length
$2p$ that is disjoint of the other cycles. When $p$ is even and
$j=\lfloor p/2\rfloor -1$, the corresponding sequence makes a
perfect matching $M$ that is disjoint of the other cycles. Then
Theorems \ref{gamma=2kn} and \ref{gamma=(2k+1)n} follow $\gamma
_{\times k,t}(G_{I})=2pk.$
\end{proof}

\begin{proposition}
\label{K_p,q} For integers $q\geq p>k\geq 2$, let $G$ be the complete bipartite graph $K_{p,q}$%
. Then\ $\gamma _{\times k,t}(G_{I})=2pk+(q-p)(k+1)$.
\end{proposition}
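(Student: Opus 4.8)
The plan is to prove the two inequalities separately, starting with the upper bound so that the target value $2pk+(q-p)(k+1)$ is available. Write $V(K_{p,q})=X\cup Y$ with $X=\{x_i\mid 1\le i\le p\}$ and $Y=\{y_j\mid 1\le j\le q\}$, let $X_i\cong K_q$ and $Y_j\cong K_p$ be the red cliques of $G_I$ replacing $x_i$ and $y_j$, and let the blue edge replacing $(x_i,y_j)$ join $x_iy_j\in X_i$ to $y_jx_i\in Y_j$. The first step is to observe that the subgraph $W$ of $G_I$ induced on $\{x_iy_j\mid 1\le i,j\le p\}\cup\{y_jx_i\mid 1\le i,j\le p\}$ is precisely an inflation of $K_{p,p}$: inside each $X_i$ the vertices $x_iy_1,\dots,x_iy_p$ span a $K_p$, each $Y_j$ with $j\le p$ lies entirely in $W$, and the blue edges restrict to a perfect matching of $W$. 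Since $p>k\ge 2$, Proposition \ref{K_p,p} provides a $\gamma_{\times k,t}(W)$-set $S_0$ with $|S_0|=2pk$; as $W$ has $2p$ red cliques each meeting $S_0$ in at least $k$ vertices (Theorem \ref{LU.bounds}), each of them meets $S_0$ in exactly $k$ vertices.

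Next I would set $S=S_0\cup\bigcup_{j=p+1}^{q}\{y_jx_1,y_jx_2,\dots,y_jx_{k+1}\}$, which is legitimate because $k+1\le p$, and note $|S|=2pk+(q-p)(k+1)$. To see that $S$ is a kTDS of $G_I$: every vertex of $W$ retains all its $W$-neighbours in $G_I$ (with equality for the vertices $y_jx_i$, $j\le p$, since then $Y_j\subseteq W$), so it is still $k$-tuple totally dominated by $S_0\subseteq S$; each vertex $x_iy_j$ with $j>p$ has the $k$ vertices of $X_i\cap S_0$ among its neighbours; and each vertex $y_jx_i$ with $j>p$ has at least $k$ of the $k+1$ chosen vertices of $Y_j$ among its neighbours. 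Hence $\gamma_{\times k,t}(G_I)\le 2pk+(q-p)(k+1)$.

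For the lower bound, let $S$ be any $\gamma_{\times k,t}(G_I)$-set, put $a_i=|S\cap X_i|$ and $b_j=|S\cap Y_j|$, and recall from Theorem \ref{LU.bounds} that $a_i,b_j\ge k$; moreover if $b_j=k$ then every vertex of $S\cap Y_j$ must have its blue neighbour in $S$, since it has only $k-1$ red neighbours in $S$. Let $M$ count the blue edges with both ends in $S$, let $e_X$ and $e_Y$ count those with only the $X$-end resp. only the $Y$-end in $S$, and set $J_1=\{j\mid b_j>k\}$, $D_X=\sum_i(a_i-k)$, $D_Y=\sum_j(b_j-k)$. Because the blue edges form a perfect matching of $G_I$ we get $\sum_i a_i=M+e_X$ and $\sum_j b_j=M+e_Y$, and subtracting gives $e_Y=e_X+(q-p)k+D_Y-D_X$. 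On the other hand every ``$Y$-only'' blue edge sits at a clique $Y_j$ with $b_j>k$, so $e_Y\le\sum_{j\in J_1}b_j=k|J_1|+D_Y$; combining this with the previous identity and $e_X\ge 0$ yields $D_X\ge k(q-p-|J_1|)$, while trivially $D_Y\ge|J_1|$. If $|J_1|\ge q-p$ then $D_Y\ge q-p$; if $|J_1|<q-p$ then, writing $t=q-p-|J_1|\ge 1$, we get $D_X+D_Y\ge kt+(q-p-t)=(q-p)+(k-1)t\ge q-p$ because $k\ge 2$. Either way $D_X+D_Y\ge q-p$, so $|S|=(p+q)k+D_X+D_Y\ge (p+q)k+(q-p)=2pk+(q-p)(k+1)$, which together with the upper bound gives the claimed equality.

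The steps needing the most care are the identification of $W$ with $(K_{p,p})_I$ together with the transfer of the ``exactly $k$ vertices per clique'' feature of its minimum kTDS, and the closing arithmetic of the lower bound, where the dichotomy on $|J_1|$ versus $q-p$ makes essential use of $k\ge 2$; the remaining verifications are routine bookkeeping with the blue perfect matching and the inclusions $N_W(v)\subseteq N_{G_I}(v)$.
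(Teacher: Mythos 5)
Your proof is correct, and it is substantially more complete than the paper's own argument, particularly for the lower bound. The paper proceeds by letting $S$ be a minimum kTDS, assuming (without justification) that every red clique meets $S$ in exactly $k$ or $k+1$ vertices, letting $\alpha$ be the number of cliques meeting $S$ in exactly $k$ vertices, and asserting --- with only a vague appeal to bipartiteness and to Proposition~\ref{K_p,p} --- that the ``$k$-cliques'' must split evenly between the two sides, whence $\alpha\leq 2p$ and $\gamma_{\times k,t}(G_I)=\min\{\alpha k+(p+q-\alpha)(k+1):0\leq\alpha\leq 2p\}$. Your upper bound is the same idea made explicit (embed $(K_{p,p})_I$ into $(K_{p,q})_I$, use its $2pk$-element kTDS, and pad each of the remaining $q-p$ cliques $Y_j$ with $k+1$ vertices), but your lower bound is a genuinely different and rigorous replacement for the paper's hand-waving: the double count of the blue perfect matching, the observation that a clique meeting $S$ in exactly $k$ vertices forces all its blue partners into $S$, and the resulting inequalities $D_X\geq k(q-p-|J_1|)$, $D_Y\geq|J_1|$ give $D_X+D_Y\geq q-p$ without ever assuming that the clique intersections are only $k$ or $k+1$. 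This is exactly the step the paper leaves unproved, so your version is the one that actually establishes the proposition. One tiny quibble: the hypothesis $k\geq 2$ is not really what drives the closing dichotomy (there $(k-1)t\geq 0$ would suffice); it is needed earlier, to guarantee $a_i,b_j\geq k$ for every red clique, which fails for $k=1$ (pendant cliques need not meet a total dominating set at all).
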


\begin{proof}
Let $S$ be an arbitrary $\gamma _{\times k,t}(G_{I})$-set such that
$\alpha $ red cliques of $G_{I}$ contain $k$ vertices and other $%
p+q-\alpha $ red cliques of $G_{I}$ contain $k+1$ vertices of $S$.
Since $G$ is bipartite, then $\alpha /2$ cliques must be selected
among of the $q$ red cliques $Y_{i}$, where $1\leq i\leq q$, and the
other second $\alpha /2$ cliques must be selected among of the $p$
red cliques $X_{i}$, where $1\leq i\leq p$. We notice that this
choosing is possible. Because, by Proposition \ref{K_p,p},
$K_{p,p}$ is respectively $(\lfloor p/2\rfloor -1)$HLPM-graph and $\lfloor p/2\rfloor $%
-Hamiltonian-like decomposable graph, when $p$ is respectively even
or odd. Thus $\alpha \leq 2p$ and so
\begin{equation*}
\begin{array}{lll}
\gamma _{\times k,t}(G_{I}) & = & \min \{\mid S\mid :S\mbox{ is a }%
\mbox{kTDS of }G_{I}\} \\
& = & \min \{\alpha k+(q+p-\alpha )(k+1):0\leq \alpha \leq 2p\} \\
& = & \min \{(q+p)(k+1)-\alpha :0\leq \alpha \leq 2p\} \\
& = & (q+p)(k+1)-2p \\
& = & 2pk+(q-p)(k+1).
\end{array}
\end{equation*}
\end{proof}

We notice that $\gamma _{\times p,t}((K_{p,q})_{I})=2pq$ and for $k=n$, $%
2pq=2pk+(q-p)(k+1)$ if and only if $p=q$. By Theorem
\ref{LU.bounds}, if $k\geq 2$ is integer and $G$ is a graph of order
$n$ with $\delta \geq k$, then $n(k+1)-n\leq \gamma _{\times
k,t}(G_{I})\leq n(k+1)-1$. Therefore Proposition \ref{K_p,q} follows
the next theorem.

\begin{theorem}
\label{exist} For each integers $n$, $k$ and $\ell$ with the
condition
$2\leq k<\ell \leq \lfloor n/2\rfloor $, there exists a graph $G$ of order $n$ such that $%
\gamma _{\times k,t}(G_{I})=n(k+1)-2\ell$.
\end{theorem}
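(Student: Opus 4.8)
The plan is to realize the prescribed value $n(k+1)-2\ell$ by a single complete bipartite graph and then simply read off the answer from Proposition \ref{K_p,q}. Concretely, I would take $G=K_{p,q}$ with $p=\ell$ and $q=n-\ell$; its order is $p+q=n$, as demanded.

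First I would verify that the hypotheses of Proposition \ref{K_p,q} hold. The condition $2\le k<\ell$ gives $p=\ell>k\ge 2$, and the condition $\ell\le\lfloor n/2\rfloor$ gives $n\ge 2\ell$, hence $q=n-\ell\ge\ell=p$. Thus $q\ge p>k\ge 2$, so Proposition \ref{K_p,q} is applicable; in the boundary case $\ell=n/2$ one has $p=q$, and that statement (requiring only $q\ge p>k$, with no strict inequality between $p$ and $q$) still applies and is consistent with Proposition \ref{K_p,p}. Then I would substitute $p=\ell$ and $q-p=n-2\ell$ into the formula $\gamma_{\times k,t}(G_I)=2pk+(q-p)(k+1)$, obtaining
\[
2\ell k+(n-2\ell)(k+1)=n(k+1)-2\ell(k+1)+2\ell k=n(k+1)-2\ell,
\]
which is precisely the claimed value. (As a sanity check, since $1\le\ell\le\lfloor n/2\rfloor$ this quantity lies in the admissible range $nk\le\gamma_{\times k,t}(G_I)\le n(k+1)-1$ noted just before the theorem.)

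There is no substantive obstacle here: the entire content of the proof is the choice $p=\ell$, $q=n-\ell$ together with the one-line arithmetic above. The only point that deserves a second look is the edge case $n=2\ell$, where $p=q$; I would make explicit that Proposition \ref{K_p,q} still covers this situation so that no separate argument is needed.
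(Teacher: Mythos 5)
Your proposal is correct and is exactly the paper's own argument: the paper also takes $G=K_{\ell,n-\ell}$ and reads the value off Proposition \ref{K_p,q} via the same one-line computation $2\ell k+(n-2\ell)(k+1)=n(k+1)-2\ell$. Your additional verification of the hypotheses $q\ge p>k\ge 2$ and the remark on the boundary case $\ell=n/2$ are sensible but do not change the route.
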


\begin{proof}
Let $G=K_{\ell,n-\ell}$. Then Proposition \ref{K_p,q} follows
\begin{equation*}
\begin{array}{lll}
\gamma _{\times k,t}(G_{I}) & = & 2\ell k+(n-2\ell)(k+1) \\
& = & n(k+1)-2\ell.
\end{array}
\end{equation*}
\end{proof}

The next theorem gives an upper bound for the $k$-tuple total
domination number of the complete multipartite graphs.

\begin{proposition}
\label{com.multipart.} Let $G$ be the complete multipartite graph $K_{n_{1},n_{2},...,n_{m}}$. Let $%
n=n_{1}+...+n_{m}$ and $n^{\prime }=\max \{\sum_{i\in J}n_{i}\mid
J\subseteq \{1,2,..m\}\mbox{ and }\sum_{i\in J}n_{i}\leq n/2\}$.
Then for every $2\leq k<n^{\prime }$,
\[
\gamma _{\times k,t}(G_{I})\leq n(k+1)-2n^{\prime }.
\]
\end{proposition}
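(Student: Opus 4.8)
The plan is to construct an explicit $k$-tuple total dominating set of $G_I$ of the required cardinality, mimicking the structure used for $K_{p,q}$ in Proposition~\ref{K_p,q}. First I would fix a subset $J\subseteq\{1,2,\dots,m\}$ attaining the maximum in the definition of $n'$, so that $\sum_{i\in J}n_i=n'$ and $\sum_{i\notin J}n_i=n-n'\ge n'$. Write $A=\bigcup_{i\in J}V_i$ and $B=\bigcup_{i\notin J}V_i$, where $V_1,\dots,V_m$ are the parts of $G$. Since $G$ is complete multipartite, every vertex in $A$ is adjacent in $G$ to every vertex in $B$ and to every vertex of the other parts inside its own side; in particular the induced subgraph $G[A,B]$ on $A\cup B$ with only the cross edges kept contains a spanning subgraph isomorphic to a spanning subgraph of $K_{n',\,n-n'}$, and more relevantly $G$ itself restricted to $A\cup B$ is a complete bipartite-like graph on the two blocks once we forget the within-block edges. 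The key point is that $G$ contains $K_{n',\,n-n'}$ as a \emph{spanning} subgraph if we use $A$ and $B$ as the two sides (every $A$–$B$ pair is an edge of $G$), so by Propositions~\ref{K_p,p} and~\ref{K_p,q} we can find, inside the part of $G_I$ coming from these cross-edges and the red cliques, a configuration in which $2n'$ of the red cliques $X_i$ hold only $k$ vertices of a kTDS and the remaining $n-2n'$ red cliques hold $k+1$ vertices.

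More concretely, I would run the argument of Proposition~\ref{K_p,q} verbatim with $p=n'$, $q=n-n'$: take the Hamiltonian-like decomposition / perfect-matching structure of $K_{n',n'}$ guaranteed by Proposition~\ref{K_p,p} on $n'$ of the red cliques on the $A$-side paired with $n'$ of the red cliques on the $B$-side, use $k$ of these decomposition-cycles (plus a matching when $k$ is odd) to furnish $k$ vertices in each of those $2n'$ cliques, and then pad every remaining red clique with $k+1$ vertices. One must check that the extra blue edges of $G_I$ (those coming from within-side edges of $G$, which are absent in $K_{p,q}$) do not interfere: they only \emph{help}, since adding more blue neighbours can only increase the number of external neighbours a vertex has in the constructed set, and the red cliques already supply $k$ red neighbours to each of their vertices that lie in the set. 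So the resulting set $S$ is a kTDS of $G_I$ with
\[
|S| = 2n'k + (n-2n')(k+1) = n(k+1) - 2n',
\]
which gives the claimed upper bound.

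The main obstacle is being careful about feasibility of the pairing and about the parity bookkeeping, exactly as in Proposition~\ref{K_p,q}: one needs $n'$ red cliques available on each side to carry the ``$k$-vertex'' load, i.e.\ one needs the bipartite-type decomposition of $K_{n',n'}$ to sit inside $G_I$, and one needs $k<n'$ so that Proposition~\ref{K_p,p} applies (this is precisely the hypothesis $2\le k<n'$). A secondary point to verify is that when $k$ is odd the perfect matching $M$ produced by Proposition~\ref{K_p,p} (for even $n'$) or the extra long cycle (for odd $n'$) can be used to complete the $k$-th layer of $2n'$ vertices; this is routine given the cited propositions. Everything else — the counting of $|S|$ and the verification that every vertex of $G_I$ has at least $k$ neighbours in $S$ — is the same computation already carried out for $K_{p,q}$, so no new ideas are needed beyond the reduction to the complete bipartite case via the block partition $A\cup B$.
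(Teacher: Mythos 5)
Your proposal is correct and follows essentially the same route as the paper: choose $J$ attaining $n'$, observe that the block bipartition $A\cup B$ spans a copy of $K_{n',\,n-n'}$ inside $G$, and transfer the kTDS of size $n(k+1)-2n'$ from Proposition~\ref{K_p,q} to $G_I$. You merely spell out more explicitly (and correctly) why the extra red-clique vertices and blue edges of $G_I$ do not break the domination, a point the paper leaves implicit.
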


\begin{proof}
We assume that $V(G)=X^{(1)}\cup X^{(2)}\cup ...\cup X^{(m)}$ is the
partition of vertices of the graph, where $X^{(i)}=\{x_{j}^{(i)}\mid
1\leq j\leq n_{i}\}$. Let $n^{\prime }=\sum_{i\in J}n_{i}$, for some
$J\subseteq \{1,2,..m\}$. Let $X=\cup _{i\in J}X^{(i)}$ and $Y=\cup
_{i\notin J}X^{(i)}$. Then every vertex of $X$ is adjacent to every
vertex of $Y$. If $H$ is the complete bipartite with the vertex set
$X\cup Y$, then it is a subgraph of $G$ and so $\gamma _{\times
k,t}(G_{I})\leq \gamma _{\times k,t}(H_{I})=n(k+1)-2n^{\prime }$, by
Proposition \ref{K_p,q}.
\end{proof}

In the end of our paper we expose some problems.

\textbf{Problems:}

1. Can be improved the upper bound $n(k+1)-1$ in Theorem
\ref{LU.bounds}?

2. Whether the lower bound $\Sigma _{1\leq i\leq m}\gamma _{\times
k,t}(G_{I}^{i})-m(k+1)+k$\ in Theorem \ref{LUbound.cutver} is sharp?

3. Characterize all graphs $G$ such that $\gamma _{\times
k,t}(G_{I})=nk+1.$


\begin{thebibliography}{99}
\bibitem{HH00}  F. Harary and T. W. Haynes, Double domination in
graphs, \textit{Ars Combin.} \textbf{55} (2000) 201-213.

\bibitem{hhs1}  T. W. Haynes, S. T. Hedetniemi, and P. J. Slater
(Eds.), \textit{Fundamentals Domination in Graphs}, Marcel Dekker,
Inc. New York, 1998.

\bibitem{hhs2}  T. W. Haynes, S. T. Hedetniemi, and P. J. Slater (Eds.),
\textit{Domination in Graphs: Advanced Topics}, Marcel Dekker, Inc. New
York, 1998.

\bibitem{HeKa09}  M. A. Henning, A. P. Kazemi, $k$-Tuple Total
Domination Number in Graphs, \textit{Discrete Applied Mathematics}, \textbf{158 (}%
2010) 1006-1011.

\bibitem{HK}  M. A. Henning, A. P. Kazemi, Total Domination in
Inflated Graphs, Manuscript 2010.

\bibitem{DH96}  J. E. Dunbar, T.W. Haynes, Domination in Infated
Graphs, \textit{Congr. Numer.} \textbf{118} (1996) 143-154.

\bibitem{Fa98}  O. Favaron, Irredundance in Infated Graphs, \textit{J.
Graph Theory} \textbf{28} (1998) 97-104.

\bibitem{Fa01}  O. Favaron, Infated Graphs with Equal Independent
Number and Upper Irredundance Number, \textit{Discrete Mathematics}
\textbf{236} (2001) 81-94.

\bibitem{Pu00}  J. Puech, The Lower Irredundance and Domination
Parameters are Equal for Inflated Trees, \textit{J. Combin. Math.
Combin. Comput.} \textbf{33} (2000) 117-127.

\bibitem{Wat69}  M. E. Watkins, A Theorem on Tait Coloring with an
Application to the Generalized Petersen Graphs, \textit{J. Combin.
Theory} \textbf{6} (1969) 152-164.

\bibitem{West} B. B. West, \emph{Introduction to Graph Theorey}, 2nd ed., Prentice Hall, USA, 2001.

\end{thebibliography}
\end{document}